\theoremstyle{plain}
\newtheorem{theorem}{Theorem}
\newtheorem{lemma}[theorem]{Lemma}
\newtheorem{corollary}[theorem]{Corollary}
\newtheorem{proposition}[theorem]{Proposition}
\newtheorem{assumption}[theorem]{Assumption}
\theoremstyle{plain}
\newtheorem{definition}[theorem]{Definition}
\newtheorem{example}[theorem]{Example}
\newtheorem{remark}[theorem]{Remark}
\DeclareMathOperator{\Hess}{Hess}
\newcommand{\cA}{\mathcal{A}}
\newcommand{\cC}{\mathcal{C}}
\newcommand{\cD}{\mathcal{D}}
\newcommand{\cG}{\mathcal{G}}
\newcommand{\cL}{\mathcal{L}}
\newcommand{\cN}{\mathcal{N}}
\newcommand{\cO}{\mathcal{O}}
\newcommand{\cV}{\mathcal{V}}
\newcommand{\E}{\mathbb{E}}
\newcommand{\R}{\mathbb{R}}
\title{Affine invariant interacting Langevin dynamics for Bayesian inference}
\author{Alfredo Garbuno-Inigo\thanks{Computing and Mathematical Sciences, California Institute of Technology,
1200 East California Boulevard, 91125 Pasadena, United States,
{\tt agarbuno@caltech.edu}} \and Nikolas N\"usken\thanks{Institute of Mathematics, University of Potsdam,
Karl-Liebknecht-Str. 24/25, D-14476 Potsdam, Germany,
{\tt nuesken@uni-potsdam.de}} \and Sebastian Reich\thanks{
Institute of Mathematics, University of Potsdam, Karl-Liebknecht-Str. 24/25, D-14476 Potsdam, Germany, and
Department of Mathematics and Statistics, University of Reading,
Reading RG6 6AX, England, {\tt sereich@uni-potsdam.de}
}
}
\begin{document}
\maketitle

\begin{abstract} We propose a computational method (with acronym ALDI) for sampling from a given target distribution based on first-order
(overdamped) Langevin dynamics which satisfies the property of affine invariance. The central idea of ALDI is to run
an ensemble of particles with their empirical covariance serving as a preconditioner for their underlying Langevin
dynamics. ALDI does not require taking the inverse or square root of the empirical covariance matrix, which
enables application to high-dimensional sampling problems. The theoretical properties of ALDI are studied in terms of non-degeneracy and
ergodicity. Furthermore, we study its connections to diffusion on Riemannian manifolds and Wasserstein gradient flows.

Bayesian inference serves as a main application area for ALDI. In case of a forward problem with additive Gaussian measurement errors,
ALDI allows for a gradient-free approximation in the spirit of the ensemble Kalman filter. A computational comparison between gradient-free
and gradient-based ALDI is provided for a PDE constrained Bayesian inverse problem.
\end{abstract}

\noindent
{\bf Keywords:} Langevin dynamics, interacting particle systems, Bayesian inference, gradient flow, multiplicative noise, affine
invariance, gradient-free\\
\noindent {\bf AMS(MOS) subject classifications:} 65N21, 62F15, 65N75, 65C30, 90C56

\section{Introduction}

In this paper, we propose an efficient sampling method for Bayesian inference which is based
on first-order (overdamped) Langevin dynamics \cite{sr:P14} and which satisfies the property of affine invariance
\cite{sr:GW10}. Here affine invariance of a computational method refers to the fact that a method is invariant
under an affine change of coordinates. A classical example is provided by Newton's method, while standard
gradient descent is not affine invariant. The importance of affine invariance as a general guiding principle for
the design of Monte Carlo sampling methods was first highlighted in the pioneering contribution \cite{sr:GW10}.

Langevin dynamics based sampling methods, on the other hand, have a long history in statistical physics \cite{sr:RDF78} and
computational statistics \cite{sr:RS03}. An important step towards affine
invariant Langevin sampling methods was taken through the introduction of Riemannian manifold Langevin Monte Carlo methods
in \cite{sr:GC11} with the metric tensor given by the Fisher information matrix. However, the Fisher information matrix
is typically not available in closed form and/or is difficult to approximate numerically. Instead, an alternative approach was
put forward in the unpublished Master thesis \cite{sr:Greengard2015}, where an ensemble of Langevin samplers is combined to provide
an empirical covariance matrix resulting in a preconditioned affine invariant MALA algorithm
(see Section \ref{sec:history} for more details). This methodology was put into the wider context of dynamics-based sampling methods
in \cite{sr:LMW18} with a focus on second-order Langevin dynamics.

An interesting link between ensembles of Langevin samplers and the ensemble Kalman filter
\cite{sr:evensen,sr:stuart15,sr:reichcotter15}, both relying on ensemble based empirical covariance matrices,
has been established more recently in \cite{sr:GIHLS19} leading to a nonlinear
Fokker--Planck equation for the associated mean-field equations and an associated Kalman--Wasserstein gradient flow structure in the space of probability measures. The same gradient flow structure has been previously identified for the time-continuous ensemble Kalman--Bucy filter mean-field equations \cite{sr:cotterreich,sr:reichcotter15}. Furthermore, if applied to a Bayesian inverse problem with additive Gaussian measurement errors and nonlinear forward map, a gradient-free approximate Langevin dynamics formulation has been proposed \cite{sr:GIHLS19} which is again based on ideas previously exploited in the ensemble Kalman filter literature \cite{sr:evensen,sr:br11}.

The present paper builds upon the unpublished note \cite{sr:NR19}, which identifies a statistically consistent finite ensemble
size implementation of the mean-field equations put forward in \cite{sr:GIHLS19}.  More precisely, the proposed
interacting Langevin dynamics possesses the desired posterior target measure as an invariant measure provided an appropriate
correction term is added, which is due to the multiplicative noise in the preconditioned Langevin system. The correction term vanishes in
the mean-field limit. Furthermore, the invariance of our finite ensemble size evolution equations (with acronym ALDI\footnote{The acronym stands for a permutation of the capital letters in {A}ffine {I}nvariant {L}angevin {D}ynamics.}) under affine coordinate transformations is established through a particular choice of the multiplicative noise term, amongst all choices consistent with the desired underlying Fokker--Planck equation. We emphasise that ALDI is straightforward to implement, does not require inversion or other matrix factorisations of the empirical covariance matrices (which is important for high-dimensional problems) and is applicable to a wide range of sampling problems.

We have already emphasised  that related computational methods have been considered in the literature before.
However, none of these contributions has investigated the non-degeneracy and ergodicity properties of such methods.
Hence, proof of non-degeneracy and ergodicity of ALDI provides a key theoretical contribution of our paper which holds provided the ensemble size, $N$, and the dimension, $D$, of the underlying random variable satisfy $N>D+1$ and the empirical covariance matrix is non-degenerate at initial time.

Finally, a gradient-free formulation of ALDI in the spirit of  \cite{sr:GIHLS19} is proposed for Bayesian inverse problems with additive Gaussian measurement errors. While the invariance of the posterior distribution is lost when making the gradient-free approximation, except for Gaussian likelihood functions, affine invariance is maintained. Numerical experiments are conducted for a PDE constrained Bayesian inference problem.
The numerical results indicate in particular that it is entirely sufficient to implement ALDI with $N = D+2$ particles;
the minimum size required for ergodicity to hold. Thus the gradient-free implementation indeed provides an accurate and computationally
inexpensive alternative.

The remainder of this paper is structured as follows. The subsequent Section \ref{sec:math prob} establishes the mathematical setting
of the sampling problems considered in this paper and provides a unifying mathematical framework for ensemble-based
first-order Langevin dynamics. Given this framework, we formulate the key algorithmic requirements on the ensemble formulation
proposed in this paper. We introduce the concept of affine invariance and prove affine invariance for the nonlinear Fokker--Planck equations
put forward in \cite{sr:GIHLS19}. The algorithmic contributions of this paper can be found in Section \ref{sec:Algorithms}. More
specifically, the novel ALDI method is put forward in Section \ref{sec:ALDI} and its gradient-free variant in
Section \ref{sec:gradient_free}. Both methods are put into the context of previous algorithmic work in Section \ref{sec:history}. Our theoretical investigations are summarised in Section \ref{sec:properties}, where the affine invariance, non-degeneracy and ergodicity
of ALDI are proven. We also put our approach into the perspective of diffusion processes on Riemannian manifolds \cite{sr:GC11,livingstone2014information} and Wasserstein gradient flows \cite{ambrosio2008gradient,villani2008optimal}.
The importance of the correction term is demonstrated for
a PDE constrained inverse problem \cite{sr:GIHLS19} in the numerical example Section \ref{sec:numerics}.
We also compare the performance of the gradient-based and gradient-free formulations of  ALDI and find that both lead to comparable numerical results with the gradient-free formulation however much cheaper to implement. We conclude the paper with a summary section.


%

\section{Mathematical problem formulation}
\label{sec:math prob}

We consider the computational problem of producing samples from a random variable $u$ with values in $\R^D$ and given probability density function (PDF)
\begin{equation}
\pi_\ast(u) = \frac{1}{Z} \exp(-\Phi(u)),
\end{equation}
where $\Phi:\R^D \to \R$ is an appropriate potential and
\begin{equation}
Z:=\int_{\R^{D}}\exp(-\Phi(u))\,{\rm d}u < \infty
\end{equation}
a normalisation constant.

\begin{example}[Bayesian inverse problems] \label{ex:BIP}
The computational Bayesian inverse problem (BIP) of sampling a random variable $u$ conditioned on an observation
$y_{\rm obs} \in\R^{K}$ with forward model
\begin{equation}\label{eq:IP}
y = \cG(u)+\xi,
\end{equation}
serves as the main motivation of this paper. Here, $\cG:\R^{D}\to\R^{K}$ denotes some nonlinear forward map and the mean zero
$\mathbb{R}^K$-valued Gaussian random variable $\xi$
represents measurement errors with positive definite error covariance matrix $R \in \mathbb{R}^{K\times K}$.
We assume that $\xi$  and $u \sim \pi_0$ are independent. Then, by Bayes' theorem, the distribution of the
conditional random variable $u| y_{\rm obs}$ is determined by
\begin{equation} \label{eq:posterior}
\pi({\rm d}u|y_{\rm obs})=\frac{1}{Z}\exp(-l(u ; y_{\rm obs}))\,\pi_0({\rm d}u),
\end{equation}
with the least-squares misfit function\footnote{Here we have introduced the weighted $l_2$-norm $\|a\|_B = (a^{\rm T} B^{-1} a)^{1/2}$ for
any symmetric positive-definite matrix $B$.}
\begin{equation} \label{eq:misfit}
l(u ; y_{\rm obs})=\frac{1}{2}\lVert R^{-\frac{1}{2}}(y_{\rm obs}-\cG(u)) \rVert^2=:\frac{1}{2}\lVert y_{\rm obs}-\cG(u) \rVert_{R}^2
\end{equation}
and the normalisation constant
\begin{equation}
Z=\int_{\R^{D}}\exp(-l(u;y_{\rm obs}))\,\pi_0({\rm d}u) < \infty.
\end{equation}
If the prior PDF $\pi_0$ is Gaussian with mean $\mu_0 \in \mathbb{R}^{D}$ and covariance matrix $P_0 \in \mathbb{R}^{D\times D}$,
then the posterior is absolutely continuous with respect to the Lebesgue measure on $\R^D$ with PDF
\begin{equation}
\pi_\ast (u) = \frac{1}{Z} \exp(-\Phi(u;y_{\rm obs})),
\end{equation}
where
\begin{equation} \label{eq:Phi}
\Phi(u;y_{\rm obs}):=l(u;y_{\rm obs})+\frac12 \|u-\mu_0\|_{P_0}^2.
\end{equation}
We write  $\Phi(u)$ for simplicity and ignore the dependence on the data $y_{\rm obs}$ from now on.
\end{example}

\noindent
The sampling methods considered in this paper are based on stochastic processes of $N$ interacting particles
moving in $\mathbb{R}^D$ with the property that the marginal distributions in each of the particles approximate
$\pi_\ast$ as $t\to \infty$. The position of the $i$th particle is denoted by $u^{(i)}\in \mathbb{R}^D$ and its value at time $t\ge 0$
by $u_t^{(i)}$, $i=1,\ldots,N$. For ease of reference, we collect all particle positions into the
$D\times N$-dimensional matrix
\begin{equation} \label{eq:collected_matrix}
U = \left( u^{(1)}, u^{(2)},\ldots,u^{(N)}\right) \in \mathbb{R}^{D\times N}.
\end{equation}
The interacting particle systems to be considered in this paper obey gradient-based stochastic evolution equations of the form
\begin{equation}\label{eq:gradient_methods}
{\rm d}{u}_t^{(i)} = -\cA (U_t)\,\nabla_{u^{(i)}} \cV (U_t )\,{\rm d}t + \Gamma (U_t ) \, {\rm d} W_t^{(i)}, \qquad
i = 1,\ldots,N.
\end{equation}
Specific choices for the potential $\cV:\R^{D\times N}\to\R$, the positive semi-definite matrix-valued $\cA (U)\in \mathbb{R}^{D\times D}$ and $\Gamma (U) \in \mathbb{R}^{D\times L}$ will be discussed below.  $L$ is a natural number with typically either $L = D$ or $L = N$. The $W_t^{(i)}$ denote independent $L$-dimensional standard Brownian motions and the It{\^o}
interpretation \cite{sr:P14} of the multiplicative noise term in (\ref{eq:gradient_methods})  is to be used.

The main algorithmic
contribution of this paper consists in developing a particular instance of (\ref{eq:gradient_methods}) with the following three
properties:
\begin{itemize}
\item[(i)] The product measure
\begin{equation}
\label{eq:extended target}
\pi_\ast^{(N)} (U) := \prod_{i=1}^N \pi_\ast \left( u^{(i)}\right)
\end{equation}
is invariant under (\ref{eq:gradient_methods}). Furthermore, $\pi_\ast^{(N)}$ is ergodic in the sense that the joint law of the process converges towards $\pi_\ast^{(N)}$ as $t \rightarrow \infty$, in an appropriate sense and under suitable conditions on the initialisation. See \cite{sr:P14} for an introduction to ergodicity in the context of stochastic evolution equations.
\item[(ii)] The equations (\ref{eq:gradient_methods}) are invariant under affine transformations of the state variables, that is, for
transformations of the form
\begin{equation} \label{eq:affine_transform}
u = Mv + b
\end{equation}
for any invertible  $M\in \R^{D\times D}$ and any shift vector $b \in \R^D$. A precise definition of affine invariance is provided in
Definition \ref{def:affine_invariance} below. See also \cite{sr:GW10,sr:Greengard2015,sr:LMW18}.
\item[(iii)] The equations (\ref{eq:gradient_methods}) are straightforward and computationally efficient to implement, that is, do not require the inversion or factorisation of $D$-dimensional matrices and/or higher-order derivatives of the potential $\cV$.
\end{itemize}

\begin{definition}[Affine invariance]\label{def:affine_invariance}
Following \cite{sr:GW10,sr:Greengard2015,sr:LMW18}, a formulation (\ref{eq:gradient_methods}) is called \emph{affine invariant} 
under transformations of the form (\ref{eq:affine_transform}), that is,
\begin{equation}
u^{(i)} = Mv^{(i)} + b,
\end{equation}
if the resulting equations in the transformed particle positions are given by
\begin{equation}
{\rm d}v_t^{(i)} = -\cA (V_t)\,\nabla_{v^{(i)}} \widetilde{\cV} (V_t )\,{\rm d}t + \Gamma (V_t) \, {\rm d} W_t^{(i)}, \qquad
i = 1,\ldots,N,
\end{equation}
for any invertible matrix $M \in \mathbb{R}^{D\times D}$ and any shift vector $b \in \mathbb{R}^D$. 
Here 
\begin{equation}
V = \left( v^{(1)},v^{(2)},\ldots,v^{(N)}\right) \in \mathbb{R}^{D\times N},
\end{equation}
and the potential $\widetilde{\cV}$ is defined by
\begin{equation}
\widetilde{\cV} (V) = \cV (U) = \cV (MV + b\,1_N^{\rm T}) ,
\end{equation}
where $1_N\in \R^N$ denotes a column vector of ones.
\end{definition}

\begin{example}[Langevin dynamics] \label{ex:BD}
The classical example of (\ref{eq:gradient_methods}) is provided by the scaled first-order (overdamped) Langevin dynamics
\begin{equation} \label{eq:BD}
{\rm d}u_t^{(i)} = -C \,\nabla_{u^{(i)}} \Phi \left(u_t^{(i)} \right) \,{\rm d}t + \sqrt{2} C^{1/2} {\rm d}W_t^{(i)},
\end{equation}
where $W_t^{(i)}$, $i=1,\ldots,N$, denotes independent $D$-dimensional Brownian motion,
$C \in \R^{D\times D}$ is a constant symmetric positive-definite matrix and $C^{1/2}$ denotes its symmetric
positive-definite square root. In this case, the particles do not interact and $\cA = C$.
Furthermore, $\Gamma = \sqrt{2} C^{1/2}$ and the potential $\cV$ is given by
\begin{equation} \label{eq:standard_potential}
\cV (U) = \sum_{i=1}^N \Phi \left(u^{(i)} \right).
\end{equation}
We note that (\ref{eq:BD}) satisfies items (i) and (iii) from above for any $N\ge 1$ but not (ii), in general. As pointed out in \cite{sr:LMW18}, the failure of \eqref{eq:BD} to be affine invariant potentially leads to inefficient sampling when $\Phi$ is poorly scaled with respect to $C$. More specifically, in the case of Bayesian inverse problems with Gaussian posterior, this scenario occurs when $C$  is vastly different from the target covariance.
\end{example}

\noindent
Let $\pi_t^{(i)}$ denote the PDF of the $i$th particle $u_t^{(i)}$ at time $t\ge 0$  with evolution equation
(\ref{eq:BD}). Then these PDFs satisfy the Fokker--Planck equation
\begin{equation} \label{eq:FPE}
\partial_t \pi_t = \nabla_u \cdot \left( \pi_t \,C \,\nabla_u \frac{\delta {\rm KL}(\pi_t| \pi_\ast)}{\delta \pi_t}\right),
\end{equation}
with $\pi_t = \pi_t^i$ and  the Kullback--Leibler divergence defined by
\begin{equation} \label{eq:KLD}
{\rm KL}(\pi| \pi_\ast) =  \int_{\R^D} \log \left(\frac{\pi(u)}{\pi_\ast(u)}\right) \pi({\rm d}u).
\end{equation}
It has been shown in \cite{sr:JKO98} that the Fokker--Planck equation (\ref{eq:FPE})
corresponds to a gradient flow structure in the space of probability measures. Furthermore,
since the variational derivative of the Kullback--Leibler divergence is given by
\begin{equation}
\frac{\delta {\rm KL}(\pi_t| \pi_\ast)}{\delta \pi_t} = \log \pi_t - \log \pi_\ast,
\end{equation}
the invariance of the product measure (\ref{eq:extended target}) under the stochastic evolution equations (\ref{eq:BD}) follows
immediately.

An important generalisation of the linear Fokker--Planck equation (\ref{eq:FPE}) has been proposed
in \cite{sr:GIHLS19}. It relies on making the matrix $C$ dependent on the PDF $\pi_t$ itself; thus leading to a nonlinear
generalisation of (\ref{eq:FPE}). More specifically, the nonlinear Fokker--Planck equation is given by
\begin{equation} \label{eq:NLFPE}
\partial_t \pi_t = \nabla_u \cdot \left( \pi_t \,C(\pi_t) \,\nabla_u \frac{\delta {\rm KL}(\pi_t| \pi_\ast)}{\delta \pi_t}\right),
\end{equation}
with
\begin{equation}\label{eq:theoretical_cov}
C(\pi_t) = \E_{\pi_t} \left[(u-\mu_t)(u-\mu_t)^{\rm T}\right], \qquad \mu_t = \E_{\pi_t}\left[u\right].
\end{equation}
This choice of $C$ is motivated by the ensemble Kalman--Bucy filter \cite{sr:reich10,sr:cotterreich,sr:GIHLS19}. The associated generalised
gradient flow structure in the space of probability measures was first stated in \cite{sr:cotterreich} in the context of the ensemble
Kalman--Bucy filter mean-field equations and has been discussed in detail under the notion of the so-called Kalman--Wasserstein
gradient flow structure in \cite{sr:GIHLS19}. See Section \ref{sec:history} and Remark \ref{rem:EnKBF_GFS} below for more details.

A key observation for the present paper is that, contrary to the classical Fokker--Planck
equation (\ref{eq:FPE}) with constant $C$, the nonlinear Fokker--Planck equation (\ref{eq:NLFPE})
is affine invariant.


\begin{lemma}[Affine invariance of Kalman--Wasserstein dynamics] \label{lem:affine_invariance}
The nonlinear Fokker--Planck equation (\ref{eq:NLFPE}) is affine invariant.
\end{lemma}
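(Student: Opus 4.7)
The plan is to make a direct change of variables in (\ref{eq:NLFPE}) and to check that each of the three ingredients (the density, the covariance, and the variational derivative of the KL divergence) transforms in such a way that the structure of the equation is preserved. Let $u = Mv + b$, and denote by $\tilde\pi_t(v) = |\det M|\,\pi_t(Mv+b)$ and $\tilde\pi_\ast(v) = |\det M|\,\pi_\ast(Mv+b)$ the pushforward densities.

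First I would record three elementary transformation rules. (a) The mean and covariance push forward as $\tilde\mu_t = M^{-1}(\mu_t - b)$ and $\tilde C(\tilde\pi_t) = M^{-1} C(\pi_t) M^{-\mathrm{T}}$, by direct substitution in the defining integrals. (b) The KL divergence is invariant under diffeomorphisms, so the factor $|\det M|$ appearing in both $\tilde\pi_t$ and $\tilde\pi_\ast$ cancels, and the variational derivative satisfies
\begin{equation*}
\frac{\delta \mathrm{KL}(\tilde\pi_t | \tilde\pi_\ast)}{\delta \tilde\pi_t}(v) = \log \tilde\pi_t(v) - \log \tilde\pi_\ast(v) = \log\pi_t(Mv+b) - \log \pi_\ast(Mv+b) = \frac{\delta \mathrm{KL}(\pi_t|\pi_\ast)}{\delta \pi_t}(Mv+b),
\end{equation*}
so taking $\nabla_v$ and using the chain rule yields $\nabla_v \frac{\delta \mathrm{KL}(\tilde\pi_t|\tilde\pi_\ast)}{\delta\tilde\pi_t} = M^{\mathrm{T}}\bigl(\nabla_u \frac{\delta \mathrm{KL}(\pi_t|\pi_\ast)}{\delta \pi_t}\bigr)(Mv+b)$. (c) For any smooth vector field $J$, the identity $\nabla_u = M^{-\mathrm{T}} \nabla_v$ implies $(\nabla_u \cdot J)(Mv+b) = \nabla_v \cdot (M^{-1} J(Mv+b))$.

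Combining these ingredients, I would compute
\begin{equation*}
\partial_t \tilde\pi_t(v) = |\det M|\,\partial_t \pi_t(Mv+b) = |\det M|\,\nabla_v \cdot \Bigl(M^{-1}\,\pi_t(Mv+b)\,C(\pi_t)\,M^{-\mathrm{T}}\,\nabla_v \tfrac{\delta \mathrm{KL}(\tilde\pi_t|\tilde\pi_\ast)}{\delta\tilde\pi_t}\Bigr),
\end{equation*}
and then absorb the factor $|\det M|$ into $\pi_t(Mv+b)$ to obtain $\tilde\pi_t$ and recognise $M^{-1} C(\pi_t) M^{-\mathrm{T}}$ as $\tilde C(\tilde\pi_t)$. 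The end result is the same nonlinear Fokker--Planck equation with $\pi_t$ replaced by $\tilde\pi_t$ and $\pi_\ast$ replaced by $\tilde\pi_\ast$, which is exactly the notion of affine invariance.

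The routine parts are (a) and (c); the only point requiring a little care is (b), specifically making sure that the Jacobian factor $|\det M|$ drops out of the variational derivative (so that no spurious constants appear under the gradient), and that the transpose $M^{\mathrm{T}}$ produced by the chain rule combines correctly with the two $M^{-\mathrm{T}}$ factors coming from $\nabla_u$ and from the transformed covariance. Once the bookkeeping of $M$ versus $M^{-\mathrm{T}}$ is aligned, the desired identity follows without further work.
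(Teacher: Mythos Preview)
Your proposal is correct and follows essentially the same change-of-variables argument as the paper's proof: define the pushforward densities, use the chain rule for $\nabla_v$ and the divergence, observe that the Jacobian factors cancel in the KL variational derivative, and identify the transformed covariance. Your covariance identity $C(\tilde\pi_t)=M^{-1}C(\pi_t)M^{-\mathrm T}$ is in fact the right one for the convention $u=Mv+b$, $\tilde\pi_t(v)=|\det M|\,\pi_t(Mv+b)$ (the paper's displayed version has the roles of $\pi_t$ and $\tilde\pi_t$ swapped, but the argument is identical).
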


\begin{proof}
We define the pushforward PDFs
\begin{equation}
\label{eq:pi transform}
\widetilde{\pi}_t(v) = |M|\, \pi_t(Mv+b), \qquad \widetilde{\pi}_\ast(v) = |M|\,\pi_\ast(Mv+b).
\end{equation}
Then
\begin{subequations}
\begin{align}
\partial_t \widetilde{\pi}_t &= |M|\, \partial_t \pi_t \\
&= |M|\,  \nabla_u \cdot \left( \pi_t \,C(\pi_t) \,\nabla_u \frac{\delta {\rm KL}(\pi_t| \pi_\ast)}{\delta \pi_t}\right)\\
&= \nabla_v \cdot \left( \widetilde{\pi}_t \,C(\widetilde{\pi}_t) \,\nabla_v \frac{\delta {\rm KL}(\widetilde{\pi}_t| \widetilde{\pi}_\ast)}{\delta
\widetilde{\pi}_t}\right).
\end{align}
\end{subequations}
Here we have used that
\begin{equation}
\label{eq:C transform}
C(\widetilde{\pi}_t) = M\, C(\pi_t)\,M^{\rm T},
\end{equation}
as well as $\nabla_v \widetilde{f}(v) = \nabla_v f(Mv+b) = M^{\rm T} \nabla_u f(u)$ for functions $\widetilde{f}(v) = f(u) = f(Mv+b)$
and an analog statement for the divergence operator. Furthermore, the variational derivatives of the Kullback--Leibler divergences satisfy
\begin{equation} \label{eq:transformed_KL}
\frac{\delta {\rm KL}(\pi_t| \pi_\ast)}{\delta \pi_t} =  \log \left(\frac{\pi}{\pi_\ast}\right)   =
\log \left(\frac{\widetilde{\pi}}{\widetilde{\pi}_\ast}\right)  = \frac{\delta {\rm KL}(\widetilde{\pi}_t| \widetilde{\pi}_\ast)}{\delta \widetilde{\pi}_t}.
\end{equation}
\end{proof}

\noindent
Building upon the affine invariance property of the nonlinear Fokker--Planck equation (\ref{eq:NLFPE}), we demonstrate in the following section how to obtain stochastic evolution equations of the form (\ref{eq:gradient_methods}) which satisfy all three properties (i)--(iii) from above. Their theoretical properties are studied in
the subsequent Section \ref{sec:properties}. In particular, we establish non-degeneracy and ergodicity, which provides the key
theoretical contribution of this paper.

%
\section{Affine invariant interacting Langevin dynamics} \label{sec:Algorithms}
%
As noted in the previous section, the nonlinear Fokker-Planck evolution \eqref{eq:NLFPE}-\eqref{eq:theoretical_cov} satisfies invariance of
the target measure ${\pi}^{(N)}_\ast$ (property (i)) as well as affine invariance (property (ii)). In this section, we address (iii), that is,
we present an interacting particle system of the form \eqref{eq:gradient_methods} which has \eqref{eq:NLFPE} as its mean field limit while
still maintaining properties (i) and (ii) for any finite number of particles. We also introduce a gradient-free approximation which is applicable to BIPs of the form (\ref{eq:Phi}). This section concludes with a summary of related previous algorithmic work.

%
\subsection{ALDI: An exact gradient-based sampling method} \label{sec:ALDI}
%

In order to define our interacting particle system, let us first define the empirical covariance matrix
\begin{equation}
\cC\left(U\right) := \frac{1}{N}\sum_{i=1}^N \left(u^{(i)} - m(U)\right)\left(u^{(i)} -
m(U)\right)^\mathrm{T}
\end{equation}
with empirical mean
\begin{equation}
m(U)  := \frac{1}{N}\sum_{i=1}^N u^{(i)} = \frac{1}{N} U\,1_N,
\end{equation}
that is, the particle-based estimators of the quantities defined in \eqref{eq:theoretical_cov}. We also introduce the
$D\times N$ matrix of the deviations of the particle positions from their mean value, that is
\begin{equation}
U' :=  \left( u^{(1)}-m(U),u^{(2)}-m(U),\ldots,u^{(N)}-m(U)\right) = U - m(U)\,1_N^{\rm T} ,
\end{equation}
which allows us to write
\begin{equation}
\cC \left(U\right) = \frac{1}{N} U' (U')^{\rm T}.
\end{equation}
Furthermore, we define a generalised (non-symmetric) square root of $\cC(U)$ via
\begin{equation}
\label{eq:square root}
\cC^{1/2}(U) := \frac{1}{\sqrt{N}} U' ,
\end{equation}
that is $\cC = \cC^{1/2} \left( \cC^{1/2}\right)^{\rm T}$.
For a moment, let us assume that $U \in \mathbb{R}^{D\times N}$ is such that $\cC(U)$ is invertible
(we will comment on this assumption following Definition \ref{def:ALDI}, see also Proposition \ref{prop:nondegeneracy})
and choose the preconditioning matrix
\begin{equation}
\label{eq:A}
\mathcal{A} (U) = \cC (U) = \frac{1}{N} U'(U')^{\rm T},
\end{equation}
the potential
\begin{equation}
\label{eq:potential}
\cV (U) = \sum_{i=1}^N \Phi \left(u^{(i)}\right) - \frac{D + 1}{2} \log | \mathcal{C}(U) |,
\end{equation}
and the diffusion matrix
\begin{equation}
\label{eq:diffusion matrix}
\Gamma (U) = \sqrt{2}\, \cC^{1/2} (U) = \frac{\sqrt{2}}{\sqrt{N}} U'
\end{equation}
in (\ref{eq:gradient_methods}), that is, $L=N$. Note that the potential (\ref{eq:potential}) contains the additional
$-(D+1)/2 \log |\cC(U)|$ term in comparison to (\ref{eq:standard_potential}), which is required to keep the target distribution 
(\ref{eq:extended target}) invariant under the state-dependent diffusion matrix $\cC(U)$. 
See Proposition \ref{prop:linear FPE} below and \cite{sr:NR19} for details.

Using the identity
\begin{equation} \label{eq:drift_identity}
\cC (U) \nabla_{u^{(i)}} \log | \cC (U)|   = \frac{2}{N} \left( u^{(i)} - m(U)\right),
\end{equation}
which follows from Jacobi's formula for the derivative of determinants (see the Appendix 
for more details), we derive the following explicit form of the proposed interacting particle Langevin dynamics.

\begin{definition}[ALDI method]
\label{def:ALDI}
The affine invariant Langevin dynamics (ALDI) is given by the interacting particle system
\begin{equation}
\label{eq:SDE}
\mathrm{d}u^{(i)}_t = - \cC (U_t ) \nabla_{u^{(i)}} \Phi \left(u_t^{(i)}\right)  \mathrm{d}t +
\frac{D+1}{N} \left(u_t^{(i)} -m(U_t) \right)  \mathrm{d}t + \sqrt{2} \,\cC^{1/2} (U_t) \,
 \mathrm{d}W_t^{(i)},
\end{equation}
for $i=1,\ldots,N$, where $W_t^{(i)}$ denotes $N$-dimensional standard Brownian motion.
\end{definition}

\noindent
We emphasise that the generalised square root $\cC^{1/2}(U)$, as defined in \eqref{eq:square root}, does not require a computationally
expensive Cholesky factorisation of $\cC(U)$, and hence the formulation \eqref{eq:SDE} satisfies the
requirement (iii). Note that although defining $\cV$ as in \eqref{eq:potential} necessitates  $N > D$ in order for the empirical covariance matrix $\cC(U)$ to be non-singular, the terms in \eqref{eq:SDE} are well-defined also for $N \le D$. While a non-singular $\cC(U)$ is required generically for the ALDI method to sample from the desired target measure $\pi_\ast^{(N)}$ (see the discussion in Section \ref{sec:ergodicity}), a smaller number of particles, $N$, is sometimes desirable in order to reduce the computational cost for high-dimensional BIPs.

If indeed $N\le D$, than $\cC(U)$ is singular and the dynamics of the interacting particle system
(\ref{eq:SDE}) is restricted to the linear subspace spanned by the $N$ initial particle positions $u_0^{(i)}$, that is,
\begin{equation} \label{eq:transform}
u^{(i)}_t = \sum_{j=1}^N m_t^{ij} u_0^{(j)} .
\end{equation}
Stochastic differential equations in the $N^2$ scalar coefficients $m_t^{ij}$ can easily be derived from (\ref{eq:SDE}) using the {\it ansatz}
(\ref{eq:transform}). In other words, provided that the initial samples $u_0^{(i)}$ are appropriately chosen, an implementation of
(\ref{eq:SDE}) with $N\le D$ can lead to a computationally efficient reduction of the BIP onto a lower dimensional linear subspace.
The affine invariance of (\ref{eq:SDE}) holds regardless of the ensemble size and is discussed in Section \ref{sec:AI} in more detail.


%
\subsection{Approximate gradient-free sampling} \label{sec:gradient_free}
%

A central idea put forward in \cite{sr:GIHLS19} (see also \cite{SPSR2019}) in the context of BIPs
described in Example \ref{ex:BIP} is to combine the preconditioned Langevin dynamics
with gradient-free formulations of the ensemble Kalman filter.
Recalling the forward map $\cG$ from \eqref{eq:IP}, the empirical cross-correlation matrix
$\cD(U)\in \R^{D\times K}$ is defined via
\begin{equation}
\cD(U) = \frac{1}{N}\sum_{i=1}^N \left(u^{(i)} - m(U)\right)\left(\mathcal{G}(u^{(i)}) -m(\cG(U))\right)^\mathrm{T}
\end{equation}
with empirical mean
\begin{equation}
m(\cG(U)) = \frac{1}{N} \sum_{i=1}^N \cG(u^{(i)}) = \frac{1}{N} \cG(U) \,1_N.
\end{equation}
We now make the approximation $\cC(U) \nabla_u \cG(u) \approx \cD(U)$, motivated by the fact that this approximation becomes
exact for affine forward maps, $\cG(u) = Gu + c$. We refer to \cite[Appendix A.1]{sr:evensen} for more details. In terms of  the ALDI formulation (\ref{eq:SDE}) and the potential $\Phi(u)$, given by (\ref{eq:Phi}), we obtain:

\begin{definition}[gradient-free ALDI]
Given a potential $\Phi (u)$ of the form (\ref{eq:Phi}), the gradient-free ALDI formulation is given by the interacting particle system
\begin{subequations}
\label{eq:SDE_DF}
\begin{align}
\mathrm{d}u^{(i)}_t &= - \left\{ \cD (U_t) R^{-1}\left(\mathcal{G}\left(u_t^{(i)}\right)-y_{\rm obs}\right) + \cC (U_t) P^{-1}_0(u_t^{(i)}-\mu_0) \right\}  \mathrm{d}t \\
\label{eq:repulsion gradient free}
& \qquad +\,\,
\frac{D+1}{N} \left(u_t^{(i)} - m(U_t) \right)  \mathrm{d}t + \sqrt{2} \cC^{1/2} (U_t)
\, \mathrm{d}W_t^{(i)},
\end{align}
\end{subequations}
for $i=1,\ldots,N$, where $W_t^{(i)}$ denote independent $N$-dimensional standard Brownian motions.
\end{definition}

\noindent
While the invariance of $\pi_\ast^{(N)}$ is lost under the gradient-free formulation (\ref{eq:SDE_DF}) (except, of course, for affine forward operators), affine invariance of the equations of motions is maintained; see Section \ref{sec:AI}.

%
\subsection{Related previous algorithmic work} \label{sec:history}
%

The idea of an affine invariant Monte Carlo method based on Langevin dynamics using an ensemble of particles and
its empirical covariance first appeared  in the unpublished Master thesis \cite{sr:Greengard2015}. More specifically, the author
proposes an affine invariant modification of the popular MALA algorithm \cite{sr:RS03,sr:GC11}, where  each particle
$u_k^{(i)}$, $i=1,\ldots,N$, is sequentially updated at time-step $k$ using the proposal
\begin{equation}
u_{k+1}^{(i)} = u_k^{(i)} - hM_k^{(i)} \nabla_{u_k^{(i)}} \Phi(u_k^{(i)}) + \sqrt{2h} L_k^{(i)} \xi_k^{(i)}
\end{equation}
where $h>0$ is the step-size, $M_k^{(i)}$ is an empirical covariance matrix based on a set of particles not including $u_k^{(i)}$, $L_k^{(i)}$
is the Cholesky factor of $M_k^{(i)}$, that is, $M_k^{(i)} = L_k^{(i)}(L_k^{(i)})^{\rm T}$, and $\xi_k^{(i)}$
is a $D$-dimensional Gaussian random variable with mean zero
and covariance matrix $I_{D\times D}$. Independently of \cite{sr:Greengard2015},  a general time-continuous framework for
affine invariant interacting particle formulations has been developed in \cite{sr:LMW18} and affine invariant implementations of
second-order Langevin dynamics using empirical covariance matrices are studied in detail.

More recently, ensemble preconditioned first-order Langevin dynamics has been revisited in \cite{sr:GIHLS19} with an emphasis on its
mean-field limit and its connection to the ensemble Kalman filter \cite{sr:evensen,sr:stuart15,sr:reichcotter15}. In fact, (\ref{eq:SDE})
appeared first in \cite{sr:GIHLS19} with the potential (\ref{eq:potential}) replaced by (\ref{eq:standard_potential}), that is
without the correction term
\begin{equation} \label{eq:correction_term}
\frac{D+1}{N} \left(u_t^{(i)} -m(U_t) \right),
\end{equation}
and with $\cC^{1/2}(U)$ being replaced by the symmetric matrix square root of the covariance matrix $\cC(U)$. The resulting method
is called the ensemble Kalman sampler (EKS) in \cite{sr:GIHLS19}.
The correction term (\ref{eq:correction_term}) is, however, needed in (\ref{eq:SDE}) in order for $\pi_\ast^{(N)}$ to be an invariant
distribution under the resulting interacting particle system (\ref{eq:gradient_methods}) and first appeared in the unpublished
note \cite{sr:NR19}. The invariance of $\pi_\ast^{(N)}$ under (\ref{eq:SDE}) is proven in Section \ref{sec:ergodicity}.

The correction term (\ref{eq:correction_term}) vanishes as $N\to \infty$ for $D$ fixed which
justifies the nonlinear Fokker--Planck equation (\ref{eq:NLFPE})  in this mean-field limit.
See \cite{sr:GIHLS19} for more details.

We note that a general discussion on necessary correction terms for Langevin dynamics with multiplicative noise can, for example, be found in \cite{sr:RS03,sr:GC11} from the perspective of Riemannian Brownian motion. We also note that general conditions on diffusion processes that guarantee invariance of a given target distribution have been investigated  in \cite[Section 2.2]{duncan2017using} and \cite{ma2015complete,sr:LMW18}.

The gradient-free approximation of the form $\cC(U) \nabla_u \cG(u) \approx \cD(U)$ originated in the ensemble
Kalman filter literature \cite{sr:evensen}. More precisely, the time continuous formulation of the ensemble 
Kalman filter, the so-called ensemble Kalman--Bucy filter given by
\begin{equation} \label{eq:EnKBF}
{\rm d} u_t^{(i)} = - \cC(U_t) \nabla_{u^{(i)}} \cG\left(u_t^{(i)}\right)R^{-1}\left( \frac{1}{2}\left\{ \mathcal{G}\left(u_t^{(i)}\right)+ m(\cG(U_t))\right\} -y_{\rm obs}\right) ,
\end{equation}
fits into the interacting particle dynamics framework (\ref{eq:gradient_methods}) with $\cA(U) = \cC(U)$, $\Gamma(U) = 0$, and
\begin{equation} \label{eq:EnKBF_potential}
\cV(U) = \frac{1}{4} \sum_{i=1}^N \Big\lVert y_{\rm obs}-\cG\left(u^{(i)}\right) \Big\rVert_{R}^2 + \frac{1}{4} \lVert y_{\rm obs}-m(\cG(U)) \rVert_{R}^2.
\end{equation}
See \cite{sr:reich10,sr:cotterreich,sr:reichcotter15} for more details. Its gradient-free formulation becomes
\begin{equation} \label{eq:EnKBF_DF}
{\rm d} u_t^{(i)} = - \cD(U_t) R^{-1}\left( \frac{1}{2}\left\{ \mathcal{G}\left(u_t^{(i)}\right)+ m(\cG(U_t))\right\} -y_{\rm obs}\right)
\end{equation}
\cite{sr:br11,sr:reichcotter15}. The derivative-free ensemble Kalman inversion (EKI) method \cite{sr:SS17,KovachkiStuart2018_ensemble}
is a slight modification of (\ref{eq:EnKBF_DF}) with the mean contribution $m(\cG(U))$ replaced by $\cG\left(u_t^{(i)}\right)$. This modification leads to a faster decay in the ensemble deviations $U_t'$ and, hence, in the covariance matrix $\cD(U_t)$ 
while retaining the evolution equation in the ensemble mean $m(U_t)$.

The extension of such gradient-free formulations to Langevin dynamics has been proposed first
in \cite{sr:GIHLS19}. Gradient-free formulations have been found to work well for unimodal posterior distributions in \cite{sr:GIHLS19}, but fail for multi-modal distributions as demonstrated in \cite{sr:RW19}. A localised covariance formulation of ALDI has been proposed in \cite{sr:RW19} to overcome this limitation. Localised covariance matrices were already considered in \cite{sr:LMW18}; but not in the context of gradient-free formulations.

%
\section{Theoretical analysis of ALDI} \label{sec:properties}
%

The aim of this section is to analyse some of the properties of the dynamics \eqref{eq:SDE}, in particular verifying the conditions (i) and (ii) outlined in Section \ref{sec:math prob}. The key observation (crucially depending on the correction term $\frac{D + 1}{2} \log | \mathcal{C}(U)|$ to the potential $\cV$ in \eqref{eq:potential}) is that the corresponding Fokker--Planck equation has the same mathematical structure as its counterpart \eqref{eq:NLFPE} for the mean-field regime:
\begin{proposition}[Linear Fokker--Planck equation]
\label{prop:linear FPE}
Let $U_t$, as defined by (\ref{eq:collected_matrix}),
satisfy the stochastic evolution equations (\ref{eq:SDE}) and assume that the time-marginal PDF ${\pi}_t^{(N)}$
of $U_t$ is smooth. Then $\pi_t^{(N)}$ satisfies the linear Fokker--Planck equation
\begin{equation} \label{eq:FPE_linear}
\partial_t \pi_t^{(N)} = \sum_{i=1}^N \nabla_{u^{(i)}} \cdot \left( \pi_t^{(N)} \,\cC \,\nabla_{u^{(i)}} \frac{\delta {\rm KL} \left(\pi_t^{(N)}| \pi_\ast^{(N)} \right)}{\delta \pi_t^{(N)}}\right).
\end{equation}
\end{proposition}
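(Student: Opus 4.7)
The plan is to start from Itô's formula, then identify the extra drift produced by differentiating the state-dependent diffusion coefficient and check that it cancels precisely against the correction term $\frac{D+1}{N}(u_t^{(i)} - m(U_t))$.

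First I would write down the standard Fokker--Planck equation associated to the system (\ref{eq:SDE}) on $\mathbb{R}^{D\times N}$. Because the Brownian motions $W_t^{(i)}$ are independent across $i$, the joint diffusion tensor is block-diagonal: the $(i,i)$ block equals $\sqrt{2}\,\cC^{1/2}(U)(\sqrt{2}\,\cC^{1/2}(U))^{\rm T} = 2\cC(U)$, and all cross-blocks vanish. Thus, writing $b^{(i)}(U) = -\cC(U)\nabla_{u^{(i)}}\Phi(u^{(i)}) + \frac{D+1}{N}(u^{(i)} - m(U))$ for the drift, the Itô Fokker--Planck equation reads
\begin{equation*}
\partial_t \pi_t^{(N)} = -\sum_{i=1}^N \nabla_{u^{(i)}} \cdot \left(\pi_t^{(N)} b^{(i)}\right) + \sum_{i=1}^N \sum_{a,b} \partial_{u^{(i)}_a}\partial_{u^{(i)}_b}\!\left(\cC_{ab}(U)\,\pi_t^{(N)}\right).
\end{equation*}

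Next I would rewrite the second-order term using the product rule: for each $i$,
\begin{equation*}
\sum_{a,b}\partial_{u^{(i)}_a}\partial_{u^{(i)}_b}\!\left(\cC_{ab}(U)\,\pi_t^{(N)}\right) = \nabla_{u^{(i)}}\!\cdot\!\left(\cC(U)\nabla_{u^{(i)}}\pi_t^{(N)}\right) + \nabla_{u^{(i)}}\!\cdot\!\left(\pi_t^{(N)}\, \bigl(\nabla_{u^{(i)}}\!\cdot \cC\bigr)(U)\right),
\end{equation*}
where $\bigl(\nabla_{u^{(i)}}\!\cdot \cC\bigr)_a(U) := \sum_b \partial_{u^{(i)}_b}\cC_{ab}(U)$. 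The crucial computation is to verify the identity
\begin{equation*}
\bigl(\nabla_{u^{(i)}}\!\cdot \cC\bigr)(U) = \frac{D+1}{N}\bigl(u^{(i)} - m(U)\bigr).
\end{equation*}
This is a short direct calculation: differentiating $\cC_{ab}(U) = \tfrac{1}{N}\sum_k (u^{(k)}_a-m_a)(u^{(k)}_b-m_b)$ yields two symmetric contributions; after summing over $b$ and using $\partial_{u^{(i)}_b}m_c = \tfrac{1}{N}\delta_{bc}$, one of them produces a factor of $1$ and the other a factor of $D$, giving the prefactor $D+1$, while the sum over $k$ of the deviations collapses to $u^{(i)}-m(U)$.

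Substituting this identity cancels the correction term in $b^{(i)}$ exactly. What remains is
\begin{equation*}
\partial_t \pi_t^{(N)} = \sum_{i=1}^N \nabla_{u^{(i)}}\!\cdot\!\left(\pi_t^{(N)}\,\cC(U)\nabla_{u^{(i)}}\Phi(u^{(i)})\right) + \sum_{i=1}^N \nabla_{u^{(i)}}\!\cdot\!\left(\cC(U)\nabla_{u^{(i)}}\pi_t^{(N)}\right).
\end{equation*}
Finally, using $\nabla_{u^{(i)}}\log \pi_\ast^{(N)}(U) = -\nabla_{u^{(i)}}\Phi(u^{(i)})$ together with $\nabla_{u^{(i)}}\pi_t^{(N)} = \pi_t^{(N)}\nabla_{u^{(i)}}\log \pi_t^{(N)}$, the two terms combine into $\sum_i \nabla_{u^{(i)}}\!\cdot\!\bigl(\pi_t^{(N)}\cC(U)\nabla_{u^{(i)}}\log(\pi_t^{(N)}/\pi_\ast^{(N)})\bigr)$, which is exactly the right-hand side of (\ref{eq:FPE_linear}) upon recalling that the variational derivative of the Kullback--Leibler divergence is $\log(\pi_t^{(N)}/\pi_\ast^{(N)})$.

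The only step requiring care is the divergence identity for $\cC(U)$; the rest is bookkeeping. I would either verify the $D+1$ prefactor by the direct index computation sketched above, or equivalently invoke the Jacobi formula already cited in (\ref{eq:drift_identity}), which gives $\cC(U)\nabla_{u^{(i)}}\log|\cC(U)| = \frac{2}{N}(u^{(i)} - m(U))$ and thereby relates $\nabla_{u^{(i)}}\!\cdot \cC$ to the potential correction $-\tfrac{D+1}{2}\log|\cC(U)|$ in (\ref{eq:potential}) in a symmetric way. Either route makes clear that the form of the correction term $\tfrac{D+1}{N}(u^{(i)}-m(U))$ is dictated precisely by the structure of $\cC(U)$ as a rank-$N$-averaged outer product of deviations, so that $\pi_\ast^{(N)}$ is stationary at every finite $N$.
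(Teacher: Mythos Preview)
Your proof is correct and follows essentially the same route as the paper: write down the It\^o Fokker--Planck operator, use the divergence identity $\nabla_{u^{(i)}}\!\cdot\cC(U)=\tfrac{D+1}{N}(u^{(i)}-m(U))$ to cancel the correction term, and then recognise the remaining expression as $\sum_i \nabla_{u^{(i)}}\cdot(\pi_t^{(N)}\cC\,\nabla_{u^{(i)}}\log(\pi_t^{(N)}/\pi_\ast^{(N)}))$. Your sketch of the index computation for the divergence identity is in fact more explicit than the paper, which simply cites \eqref{eq:div calculations} and the technical report for this step.
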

\begin{proof}
The proof can be found in the Appendix. 
See also the technical report \cite{sr:NR19}.
\end{proof}

\noindent
Note that the PDF $\pi_t^{(N)}$ in \eqref{eq:FPE_linear} is defined on the extended space $\mathbb{R}^{D\times N}$,
whereas $\pi_t$ in \eqref{eq:NLFPE} is defined on $\mathbb{R}^D$. In contrast to \eqref{eq:FPE_linear}, the mean-field equation
\eqref{eq:NLFPE} is nonlinear since $C(\pi_t)$ depends on the solution $\pi_t$ itself.

%
\subsection{Non-degeneracy and ergodicity}
\label{sec:ergodicity}
%

As a first result, we have that property (i) is satisfied for the extended target measure \eqref{eq:extended target}
on the joint state space $\mathbb{R}^{D\times N}$. This follows directly from Proposition \ref{prop:linear FPE}:
\begin{corollary}[Invariance of the posterior measure]
	\label{cor:invariance}
	The extended target measure (\ref{eq:extended target})
	is invariant for \eqref{eq:SDE}, that is, if $U_0 \sim \pi_\ast^{(N)}$,
	then $U_t \sim \pi_\ast^{(N)}$ for all $t \ge 0$.
\end{corollary}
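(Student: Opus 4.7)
The plan is to deduce the invariance directly from Proposition \ref{prop:linear FPE}, which shows that the joint density $\pi_t^{(N)}$ of the ALDI particle system evolves according to the linear Fokker--Planck equation
\begin{equation*}
\partial_t \pi_t^{(N)} = \sum_{i=1}^N \nabla_{u^{(i)}} \cdot \left( \pi_t^{(N)} \,\cC \,\nabla_{u^{(i)}} \frac{\delta {\rm KL} \left(\pi_t^{(N)}| \pi_\ast^{(N)} \right)}{\delta \pi_t^{(N)}}\right).
\end{equation*}
The key observation is that the variational derivative of the relative entropy, as recalled just after \eqref{eq:KLD}, is $\log(\pi_t^{(N)}/\pi_\ast^{(N)})$. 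Substituting the candidate stationary density $\pi_t^{(N)} = \pi_\ast^{(N)}$ therefore makes this variational derivative identically zero (or, interpreted up to additive constants, kills its gradient), so the entire right-hand side of the Fokker--Planck equation vanishes.

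Concretely, I would first verify that $\pi_\ast^{(N)}$, being a product of the smooth integrable densities $\pi_\ast(u^{(i)})$, is a legitimate smooth solution of the Fokker--Planck equation to which Proposition \ref{prop:linear FPE} applies. I would then compute the right-hand side with $\pi_t^{(N)} = \pi_\ast^{(N)}$: each summand has the form $\nabla_{u^{(i)}} \cdot (\pi_\ast^{(N)} \cC \,\nabla_{u^{(i)}} \log(\pi_\ast^{(N)}/\pi_\ast^{(N)})) = 0$. Hence $\partial_t \pi_t^{(N)} = 0$, so if $U_0 \sim \pi_\ast^{(N)}$ then $U_t \sim \pi_\ast^{(N)}$ for all $t \ge 0$ by uniqueness of the Fokker--Planck evolution under the smoothness assumption of Proposition \ref{prop:linear FPE}.

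There is no real obstacle here once Proposition \ref{prop:linear FPE} is granted; the corollary is little more than the standard observation that the gradient flow of $\KL(\cdot \,\|\, \pi_\ast^{(N)})$ in a suitable (Kalman--)Wasserstein geometry has $\pi_\ast^{(N)}$ as its unique critical point. The only subtlety worth flagging in the write-up is that $\cC$ depends on $U$, so one should emphasise that the vanishing occurs pointwise on $\mathbb{R}^{D\times N}$ regardless of the value of $\cC(U)$, and that a separate non-degeneracy argument (deferred to Proposition \ref{prop:nondegeneracy}) is needed to upgrade invariance to ergodicity.
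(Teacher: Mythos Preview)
Your proposal is correct and follows essentially the same route as the paper: both arguments plug $\pi_t^{(N)} = \pi_\ast^{(N)}$ into the Fokker--Planck equation \eqref{eq:FPE_linear} from Proposition \ref{prop:linear FPE} and observe that the variational derivative of the Kullback--Leibler divergence vanishes there (the paper phrases this as $\pi_\ast^{(N)}$ being the minimiser of $\KL(\cdot\,|\,\pi_\ast^{(N)})$, you compute $\log(\pi_\ast^{(N)}/\pi_\ast^{(N)})=0$ directly). Your additional remarks on smoothness, the pointwise vanishing regardless of $\cC(U)$, and the separation from ergodicity are apt but not needed for the bare invariance statement.
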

\begin{proof}
Observe that $\mathrm{KL}( \pi^{(N)} \vert \pi_\ast^{(N)})$ is minimised for $\pi^{(N)} = \pi_\ast^{(N)}$, and hence
$$
\frac{\delta {\rm KL}\left(\pi^{(N)}| \pi_\ast^{(N)} \right)}{\delta \pi^{(N)}} \Big\vert_{\pi^{(N)} = \pi_\ast^{(N)}} = 0.
$$
Using \eqref{eq:FPE_linear}, we immediately see that $\partial_t \pi_\ast^{(N)} = 0$, implying the claimed result.
\end{proof}


\noindent Note that $\pi_\ast^{(N)}$ is not the unique invariant measure for the dynamics \eqref{eq:SDE}. For instance, if $U = \left(u^{(1)}, \ldots, u^{(N)}\right)$ with $u^{(1)} = u^{(2)} = \ldots = u^{(N)}$, then $\cC(U) = 0$ and $u^{(i)} = m(U)$, and hence $\delta_U$ (the Dirac measure centred at $U$) is invariant.
To ensure favourable ergodic properties, we need to prove that $\pi_\ast^{(N)}$ is the unique invariant measure that is reachable by the dynamics from an appropriate set of initial conditions. First, we shall make the following assumption on the
potential $\Phi$:
\begin{assumption}[Regularity and growth conditions on the potential $\Phi$]
	\label{ass:V}
	Assume that $\Phi \in C^2(\mathbb{R}^D) \cap L^1(\pi_\ast)$. Furthermore, assume that there exists a compact set $K \subset \mathbb{R}^D$ and constants $c_2>c_1 > 0$ such that
	\begin{subequations}
		\begin{align}
		c_1 \vert u \vert^2 & \le \Phi(u) \le c_2 \vert u \vert^2, \\
		c_1 \vert u \vert  & \le \vert \nabla \Phi(u) \vert \le c_2 \vert u \vert, \\
		\label{eq:Hess bound}
		c_1  I_{D \times D} & \le  \Hess \Phi(u) \le c_2 I_{D \times D},
		\end{align}
	\end{subequations}
	for all $u \in \mathbb{R}^D \setminus K$.
\end{assumption}

\noindent
The bound \eqref{eq:Hess bound} is to be understood in the sense of quadratic forms. Assumption \ref{ass:V} is satisfied for target measures with Gaussian tails. Indeed, $\Phi = \Phi_0 + \Phi_1$ is admissible, where $\Phi_0(u) = \frac{1}{2} u \cdot S u$ is quadratic (with $S \in \mathbb{R}^{D \times D}$ strictly positive definite), and $\Phi_1 \in C_c^\infty(\mathbb{R}^D)$ is a smooth perturbation with compact support. We would like to emphasise that Assumption \ref{ass:V} can be relaxed with minimal effort, but we refrain from doing so for ease of exposition.

Due to the fact that $\cC(U)$ is not uniformly bounded from below on $\mathbb{R}^{D\times N}$, the associated Fokker-Planck
operator is not uniformly elliptic and standard ergodicity results are not applicable.
However, we have the following non-degeneracy result.

\begin{proposition}[Non-degeneracy of the empirical covariance matrix]
	\label{prop:nondegeneracy}
	Let Assumption \ref{ass:V} be satisfied and assume that $\cC(U_0)$ is strictly positive definite. Then \eqref{eq:SDE} admits a unique global strong solution, and $\cC(U_t)$ stays strictly positive definite for all $t \ge 0$, almost surely.
\end{proposition}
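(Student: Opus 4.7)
The plan is to break the proof into three steps: \textbf{(i)} local existence and uniqueness of a strong solution, \textbf{(ii)} global existence by ruling out finite-time blow-up via a Lyapunov argument, and \textbf{(iii)} non-degeneracy via an It\^o analysis of $F(U):=\log\det\cC(U)$. The key step is \textbf{(iii)}, in which the correction drift $\tfrac{D+1}{N}(u^{(i)}-m(U))$ produces the cancellation with the It\^o correction from the multiplicative noise that prevents $\det\cC(U_t)$ from vanishing.

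For step \textbf{(i)}, the drift and diffusion coefficients in \eqref{eq:SDE} are locally Lipschitz on $\mathbb{R}^{D\times N}$: $\cC(U)$ is polynomial in $U$, $\cC^{1/2}(U)=U'/\sqrt{N}$ is linear in $U$, and $\nabla\Phi\in C^1$ by Assumption \ref{ass:V}. Standard SDE theory yields a unique strong solution up to the explosion time $\tau_\infty=\lim_{R\to\infty}\tau_R$ with $\tau_R:=\inf\{t\ge 0: |U_t|\ge R\}$. For step \textbf{(ii)}, I would apply It\^o to the Lyapunov function $\Psi(U):=\sum_{i=1}^N\Phi(u^{(i)})$; the leading drift contribution $-\sum_i \nabla\Phi(u^{(i)})^{\mathrm{T}}\cC\,\nabla\Phi(u^{(i)})$ is non-positive, while the remaining terms from the correction drift and the It\^o correction $\sum_i\mathrm{tr}(\Hess\Phi(u^{(i)})\cC)$ are bounded by $C(1+\Psi(U))$ thanks to the polynomial growth bounds in Assumption \ref{ass:V} combined with $\Phi(u)\ge c_1|u|^2$. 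A standard localisation-and-Gronwall argument then yields $\PP(\tau_R\le T)\to 0$ as $R\to\infty$, hence $\tau_\infty=\infty$ almost surely.

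For step \textbf{(iii)}, introduce the stopping time $\tau_\epsilon:=\inf\{t\ge 0: \det\cC(U_t)\le\epsilon\}$ and work on the random interval $[0,\tau_\epsilon\wedge\tau_R)$, where It\^o's formula is applicable to $F$. Using \eqref{eq:drift_identity} rearranged to $\nabla_{u^{(i)}}F=\tfrac{2}{N}\cC^{-1}(u^{(i)}-m)$ together with the covariation identity
\begin{equation*}
d\langle\cC_{ab}(U_t),\cC_{cd}(U_t)\rangle = \tfrac{4}{N}\big[\cC_{ac}\cC_{bd}+\cC_{ad}\cC_{bc}\big]\,dt,
\end{equation*}
derived from $\cC=\tfrac{1}{N}U'(U')^{\mathrm{T}}$ and the explicit form of the multiplicative noise in \eqref{eq:SDE}, a direct matrix-It\^o computation yields
\begin{equation*}
dF(U_t) = \left[-\tfrac{2}{N}\sum_{i=1}^N(u^{(i)}-m)\cdot\nabla\Phi(u^{(i)}) + \tfrac{2D(N-1)}{N}\right]dt + dM_t,
\end{equation*}
where $M_t$ is a local martingale whose quadratic variation rate is a bounded constant, independent of $U_t$. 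By Assumption \ref{ass:V} the drift is controlled by a polynomial in $|U_t|$, hence uniformly bounded on $[0,T\wedge\tau_R]$ for every $R,T<\infty$. Consequently $F(U_t)$ stays bounded below on $[0,T\wedge\tau_R]$ almost surely, forcing $\tau_\epsilon\to\infty$ as $\epsilon\downarrow 0$; combined with step \textbf{(ii)}, this yields $\det\cC(U_t)>0$ for all $t\ge 0$ almost surely.

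The main obstacle is the matrix-It\^o calculation for $F=\log\det\cC$. One must compute the drift contribution $\tfrac{2D(N+D)}{N}$ coming jointly from the correction term $\tfrac{D+1}{N}(u^{(i)}-m)$ and from $\mathrm{tr}(\cC^{-1}\,\mathrm{drift}(d\cC))$, then subtract the It\^o correction $\tfrac{2D(D+1)}{N}$ arising from the fourth-order tensor $d\langle\cC_{ab},\cC_{cd}\rangle$, and observe the precise cancellation yielding the sign-definite constant $\tfrac{2D(N-1)}{N}$. This cancellation --- which is the reason for including the $-\tfrac{D+1}{2}\log|\cC(U)|$ modification in the potential $\cV$ of \eqref{eq:potential} --- is what ultimately prevents $\log\det\cC$ from diverging to $-\infty$.
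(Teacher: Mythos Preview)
Your proof is correct and takes a genuinely different route from the paper. The paper combines the blow-up and degeneracy obstacles into a \emph{single} Lyapunov function, namely the modified potential
\[
\cV(U)=\sum_{i=1}^N \Phi\big(u^{(i)}\big)-\tfrac{D+1}{2}\log|\cC(U)|
\]
from \eqref{eq:potential}, proves the one-sided bound $\cL\cV\le\gamma\cV$ on $E=\{U:\cC(U)\text{ invertible}\}$ (this is the key Lemma in the Appendix), and then runs a single Khasminskii-type stopping-time argument with the sublevel sets of $\cV$. Since $\cV(U)\to\infty$ both as $|U|\to\infty$ and as $\det\cC(U)\to 0$, non-explosion and non-degeneracy are obtained in one stroke. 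You instead decouple the two issues: step~(ii) uses $\Psi=\sum_i\Phi(u^{(i)})$ alone to rule out explosion, and step~(iii) carries out a direct It\^o analysis of $F=\log\det\cC$. The extra mileage of your approach is the explicit SDE for $F$, whose quadratic-variation rate is the \emph{constant} $8D/N$; on $\{|U|\le R\}$ the process $F(U_t)$ is therefore a bounded-drift perturbation of a scaled Brownian motion and cannot reach $-\infty$ in finite time. This is clean and makes the mechanism very explicit, while the paper's route is more compact and ties the result directly to the representation \eqref{eq:modified SDE}.

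One expository point: your non-degeneracy argument does not actually rely on the sign of the constant $2D(N-1)/N$, nor on any ``cancellation'' produced by the correction drift --- only boundedness of the drift on bounded $U$ and finiteness of the quadratic variation are used. The correction term \eqref{eq:correction_term} is essential for the invariance of $\pi_\ast^{(N)}$ (Proposition~\ref{prop:linear FPE}), but the non-degeneracy conclusion would go through equally well for the dynamics without it.
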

\begin{proof}
	The proof rests on the identity (\ref{eq:drift_identity})
	so that \eqref{eq:SDE} can be written in the form
	\begin{equation}
	\label{eq:modified SDE}
	\mathrm{d}u^{(i)}_t = - \cC(U_t) \nabla_{u^{(i)}} \cV(U_t) \, \mathrm{d}t  + \sqrt{2} \cC^{1/2}(U_t)
	\, \mathrm{d}W_t^{(i)}, \quad i=1,\ldots,N,
	\end{equation}
	with the potential $\cV$ given by (\ref{eq:potential}), making use of the repulsive effect of the term $$- \frac{D + 1}{2} \log | \cC(U)|.$$ Details can be found in the Appendix. 
\end{proof}
\noindent With Proposition \ref{prop:nondegeneracy} in place, the proof of the following ergodicity result is relatively straightforward:
\begin{proposition}[Ergodicity]
	\label{prop:ergodicity}
	Assume the conditions from Proposition \ref{prop:nondegeneracy}, and furthermore that $N > D+1$. Then the dynamics is ergodic, that is,
	${\pi}^{(N)}_t \rightarrow \pi^{(N)}_\ast$ as $t \rightarrow \infty$ in total variation distance.
\end{proposition}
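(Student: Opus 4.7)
The SDE \eqref{eq:SDE} is not uniformly elliptic on all of $\R^{D\times N}$: the diffusion matrix $\cC(U)$ degenerates on the algebraic variety $\partial\cM := \{U\in\R^{D\times N} : \det\cC(U) = 0\}$, and the additional drift $\frac{D+1}{N}(u^{(i)}-m(U))$ (equivalent to $\frac{D+1}{2}\cC\nabla_{u^{(i)}}\log|\cC|$) is singular there. Proposition \ref{prop:nondegeneracy} is the crucial enabling observation: starting in the open set $\cM:=\{U : \cC(U)\succ 0\}$, the process remains in $\cM$ for all $t\ge 0$ almost surely. Since the complement $\partial\cM$ is a proper algebraic subvariety and hence Lebesgue-null, we also have $\pi_\ast^{(N)}(\cM)=1$. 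My plan is therefore to carry out the ergodic analysis on $\cM$ and combine (a) the invariance of $\pi_\ast^{(N)}$ from Corollary \ref{cor:invariance}, (b) the strong Feller property of the semigroup on $\cM$, and (c) irreducibility on $\cM$, concluding via a Doob-type theorem, with a Harris-type upgrade to TV convergence from all initial conditions using Assumption \ref{ass:V}.

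For (b), I would observe that on $\cM$ the coefficients of \eqref{eq:modified SDE} are smooth and the diffusion matrix (viewed as a block-diagonal operator acting on $\R^{DN}$) has each block $\cC(U)\succ 0$; hence the generator is elliptic on every compact subset of $\cM$. Classical parabolic regularity then yields a smooth, strictly positive transition density $p_t(U_0,\cdot)$ on $\cM$ for every $t>0$, implying the strong Feller property. For (c), I would apply the Stroock--Varadhan support theorem: the set $\cM$ is open and connected in $\R^{D\times N}$, so any two points in $\cM$ can be joined by a smooth path staying in $\cM$; the rank-$D$ noise per particle (when $\cC\succ 0$) allows one to construct bounded controls following this path, and the resulting support argument shows that the transition kernel assigns positive mass to every open subset of $\cM$ from every initial state in $\cM$.

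Given (a)--(c), Doob's theorem yields that $\pi_\ast^{(N)}$ is the unique invariant probability measure on $\cM$ and produces TV convergence $\pi_t^{(N)}\to\pi_\ast^{(N)}$ for any initial law absolutely continuous with respect to $\pi_\ast^{(N)}$. To upgrade the conclusion to arbitrary initial conditions in $\cM$ and to obtain (exponential) TV convergence, I would establish a Foster--Lyapunov drift condition with candidate Lyapunov function
\begin{equation*}
V(U) = \sum_{i=1}^N \Phi(u^{(i)}) - \tfrac{D+1}{2}\log|\cC(U)| + \text{const},
\end{equation*}
which is precisely the potential $\cV$ in \eqref{eq:potential} shifted to be nonnegative. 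The quadratic growth of $\Phi$ together with the Hessian lower bound \eqref{eq:Hess bound} from Assumption \ref{ass:V} controls the generator away from infinity, while $-\log|\cC|\to+\infty$ as $U\to\partial\cM$ provides the coercivity keeping the process away from the degenerate set. Plugging into the Harris--Hairer--Mattingly framework then gives exponential ergodicity in a weighted TV norm, in particular TV convergence.

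The principal obstacle lies in verifying the drift inequality $LV\le -\kappa V+K$. Applying It\^o's formula one must handle the singular contribution $\nabla_{u^{(i)}}\log|\cC|$ and the second-order trace term $\Tr(\cC\,\Hess_{u^{(i)}}\log|\cC|)$ arising from Jacobi's formula; these interact with the dissipative piece $-\cC\nabla_{u^{(i)}}\Phi$ in a delicate way, and it is here that the strict inequality $N>D+1$ enters quantitatively, ensuring that the repulsion coefficient $(D+1)/N<1$ is outweighed by the dissipation induced by $\Phi$ via \eqref{eq:Hess bound}. Once this drift inequality is in place, coupled with the local minorisation encoded in the strong Feller/irreducibility analysis on $\cM$, the ergodic conclusion follows.
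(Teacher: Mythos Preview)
Your overall architecture (local ellipticity on the non-degenerate set, strong Feller, irreducibility via a support/control argument, then Doob/Harris) is close in spirit to the paper's, which likewise reduces to ellipticity on $E=\{U:\cC(U)\succ 0\}$, path-connectedness of $E$, existence of an invariant measure with positive Lebesgue density, and then cites Kliemann and Meyn--Tweedie for recurrence, irreducibility and TV convergence. But there is a genuine gap in your plan, and it is precisely the point where the hypothesis $N>D+1$ is actually used.

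You assert in step~(c) that ``the set $\cM$ is open and connected in $\R^{D\times N}$'' as if this were immediate. It is not. Connectedness of $\cM=E$ is exactly the nontrivial topological fact that drives the proof, and it \emph{fails} when $N=D+1$: in that case $E$ splits into two connected components (the paper notes this explicitly). The paper devotes a separate lemma to showing that $E$ is path-connected once $N\ge D+2$, using leave-one-out covariances $\cC^{-j}(U)$ and the update identity
\[
\cC(U)=\tfrac{N-1}{N}\,\cC^{-j}(U)+\tfrac{N-1}{N^{2}}\bigl(u^{(j)}-m^{-j}(U)\bigr)\bigl(u^{(j)}-m^{-j}(U)\bigr)^{\rm T},
\]
which allows one to move one particle at a time through $E$ provided the remaining $N-1\ge D+1$ particles already yield an invertible covariance. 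Without this argument your Stroock--Varadhan step cannot reach every open set in $\cM$, and the irreducibility claim collapses.

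Correspondingly, your explanation of \emph{where} $N>D+1$ enters is wrong. You locate it in the Foster--Lyapunov drift inequality, claiming the ``repulsion coefficient $(D+1)/N<1$'' must be dominated by dissipation from~\eqref{eq:Hess bound}. In fact the Lyapunov bound the paper proves, $\cL\cV\le\gamma\cV$ on $E$, holds without any constraint relating $N$ and $D$; the contribution from $\cV_\cC=-\tfrac{D+1}{2}\log|\cC|$ in the second-order term collapses to the \emph{constant} $-\tfrac{(D+1)D}{2N}(N-1)$, and the first-order cross term is controlled by the nonpositive $-\sum_i\nabla\Phi\cdot\cC\,\nabla\Phi$. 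No competition between $(D+1)/N$ and the Hessian lower bound arises. The role of $N>D+1$ is purely topological (connectedness of $E$), not analytic. If you repair the connectedness step, the remainder of your outline is a legitimate alternative route; note, though, that the paper only establishes the weaker inequality $\cL\cV\le\gamma\cV$ (sufficient for non-explosion and, combined with the cited recurrence results, for TV convergence), not the geometric drift $\cL V\le-\kappa V+K$ you are aiming for.
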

\begin{proof}
The proof can be found in the Appendix. 
\end{proof}
\begin{remark}
In the case when $N\le D$, ergodicity will not hold, since the dynamics is constrained to a subspace according to the discussion following Definition \ref{def:ALDI}. In the case when $N=D+1$ one can show that the set 
\begin{equation}
\label{eq:invertible set}
E = \left\{ U \in \mathbb{R}^{D \times N}: \quad \cC(U)  \text{ is invertible}  \right\}
\end{equation}
has two connected components. The dynamics will then be ergodic with respect to $\pi^{(N)}_\ast$ restricted to one of these, depending on the initial condition. This is acceptable from an algorithmic viewpoint, but we do not treat this case separately for simplicity.
\end{remark}

%
\subsection{Affine invariance} \label{sec:AI}
%

We show that \eqref{eq:SDE} and its gradient-free variant (\ref{eq:SDE_DF}) are affine-invariant, in the terminology introduced in \cite{sr:GW10,sr:Greengard2015} and summarised in Definition \ref{def:affine_invariance}.

\begin{lemma}[Affine invariance of ALDI] \label{lem:affine_invariance_ALDI}
The Fokker--Planck equation (\ref{eq:FPE_linear}), its associated interacting particle system (\ref{eq:SDE}) as well
as its gradient-free formulation (\ref{eq:SDE_DF}) are all affine invariant.
\end{lemma}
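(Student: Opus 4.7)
The plan is to mirror the argument used for Lemma \ref{lem:affine_invariance}, but now at the level of the joint state space $\R^{D\times N}$ and the finite-ensemble equations themselves. Throughout, fix an invertible $M\in\R^{D\times D}$ and a shift $b\in\R^D$ and set $u^{(i)}=Mv^{(i)}+b$, so that $U=MV+b\,1_N^{\mathrm T}$. The key elementary observations I would establish up front, and then use repeatedly, are: (a) the deviation matrices transform linearly, $V'=M^{-1}U'$, and hence $\cC(V)=M^{-1}\cC(U)M^{-\mathrm T}$ and $\cC^{1/2}(V)=M^{-1}\cC^{1/2}(U)$ (using the generalised square root \eqref{eq:square root}); (b) gradients transform contravariantly, $\nabla_{u^{(i)}}\Phi(u^{(i)})=M^{-\mathrm T}\nabla_{v^{(i)}}\widetilde\Phi(v^{(i)})$ with $\widetilde\Phi(v)=\Phi(Mv+b)$, and similarly $\nabla_{u^{(i)}}\cG(u^{(i)})=M^{-\mathrm T}\nabla_{v^{(i)}}\widetilde\cG(v^{(i)})$; (c) the centring term $u^{(i)}-m(U)=M(v^{(i)}-m(V))$ behaves like the deviations.

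For the linear Fokker--Planck equation \eqref{eq:FPE_linear}, I would define the pushforward $\widetilde\pi_t^{(N)}(V)=|M|^N\pi_t^{(N)}(MV+b\,1_N^{\mathrm T})$ and $\widetilde\pi_\ast^{(N)}$ analogously. Using (a), the exact analogue of \eqref{eq:transformed_KL} holds on $\R^{D\times N}$ because the Jacobian factor $|M|^N$ cancels inside the logarithm, giving
\begin{equation}
\frac{\delta{\rm KL}\bigl(\pi_t^{(N)}\big|\pi_\ast^{(N)}\bigr)}{\delta\pi_t^{(N)}}=\frac{\delta{\rm KL}\bigl(\widetilde\pi_t^{(N)}\big|\widetilde\pi_\ast^{(N)}\bigr)}{\delta\widetilde\pi_t^{(N)}}.
\end{equation}
Then a direct change-of-variables computation, combined with $\nabla_{u^{(i)}}=M^{-\mathrm T}\nabla_{v^{(i)}}$ and $\cC=M\cC(V)M^{\mathrm T}$, converts each summand $\nabla_{u^{(i)}}\cdot(\pi_t^{(N)}\cC\,\nabla_{u^{(i)}}\cdot)$ into $\nabla_{v^{(i)}}\cdot(\widetilde\pi_t^{(N)}\cC(V)\nabla_{v^{(i)}}\cdot)$, exactly as in the proof of Lemma \ref{lem:affine_invariance}.

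For the SDE \eqref{eq:SDE}, I would simply apply the linear change of variables $v_t^{(i)}=M^{-1}(u_t^{(i)}-b)$, which introduces no It\^o correction. The drift term $-\cC(U_t)\nabla_{u^{(i)}}\Phi$ becomes $-M\cC(V_t)M^{\mathrm T}\cdot M^{-\mathrm T}\nabla_{v^{(i)}}\widetilde\Phi=-M\cC(V_t)\nabla_{v^{(i)}}\widetilde\Phi$, the correction drift $\frac{D+1}{N}(u_t^{(i)}-m(U_t))$ becomes $\frac{D+1}{N}M(v_t^{(i)}-m(V_t))$, and the noise $\sqrt2\,\cC^{1/2}(U_t)\mathrm dW_t^{(i)}$ becomes $\sqrt2\,M\cC^{1/2}(V_t)\mathrm dW_t^{(i)}$. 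Multiplying the whole SDE by $M^{-1}$ yields the ALDI system in the $v^{(i)}$ variables with potential $\widetilde\Phi$, which is precisely Definition \ref{def:affine_invariance}.

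For the gradient-free formulation \eqref{eq:SDE_DF} the only new ingredient is the cross-covariance $\cD$. I would record that $\widetilde\cG(v)=\cG(Mv+b)$ implies $\cD(V)=M^{-1}\cD(U)$ directly from the definition, so $M^{-1}\cD(U)R^{-1}(\cG(u^{(i)})-y_{\rm obs})=\cD(V)R^{-1}(\widetilde\cG(v^{(i)})-y_{\rm obs})$. For the prior term I would introduce the transformed mean $\widetilde\mu_0=M^{-1}(\mu_0-b)$ and covariance $\widetilde P_0=M^{-1}P_0M^{-\mathrm T}$, giving $P_0^{-1}(u^{(i)}-\mu_0)=M^{-\mathrm T}\widetilde P_0^{-1}(v^{(i)}-\widetilde\mu_0)$ and hence $M^{-1}\cC(U)P_0^{-1}(u^{(i)}-\mu_0)=\cC(V)\widetilde P_0^{-1}(v^{(i)}-\widetilde\mu_0)$; the remaining terms were handled in the gradient-based case. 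There is no real obstacle here beyond book-keeping; the only subtlety worth flagging explicitly is that in the gradient-free case affine invariance is with respect to the transformed Gaussian prior, consistent with the remark (already made in the paper) that invariance of $\pi_\ast^{(N)}$ itself is generally lost for nonlinear $\cG$.
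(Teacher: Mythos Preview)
Your proposal is correct and follows essentially the same route as the paper's proof: both verify the transformation rules $\cC(U)=M\cC(V)M^{\mathrm T}$, $\cC^{1/2}(U)=M\cC^{1/2}(V)$, $\cD(U)=M\cD(V)$ and the chain rule for gradients, then carry over the Kullback--Leibler identity \eqref{eq:transformed_KL} to the extended state space. Your write-up is somewhat more explicit than the paper's---in particular you spell out the transformation of the correction drift and of the Gaussian prior parameters $(\mu_0,P_0)\mapsto(\widetilde\mu_0,\widetilde P_0)$ in the gradient-free case, which the paper leaves implicit---but there is no substantive difference in strategy.
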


\begin{proof}
We follow the proof of Lemma \ref{lem:affine_invariance}. Since $\cC(U) = M\,\cC(V)
M^{\rm T}$ we also have $\cA(U) = M \cA(V) M^{\rm T}$. Furthermore,
\begin{equation}
\nabla_{v^{(i)}} \widetilde{f}(V) = \nabla_{v^{(i)}} f \left(M V + b\,1_N^{\rm T}\right)
= M^{\rm T} \nabla_{u^{(i)}} f(U)
\end{equation}
for functions $ \widetilde{f}(V) =  f(U) = f\left(M V+ b\,1_N^{\rm T} \right)$,
and an analogous statement holds for the divergence operator. Finally, equality (\ref{eq:transformed_KL})
also holds for the Kullback--Leibler divergences over extended state space. Along the same lines, the affine invariance can also
be checked directly at the level of the stochastic differential equations (\ref{eq:SDE}). In particular, it holds that
$\cC^{1/2}(U) = M \cC^{1/2}(V)$. Furthermore,
\begin{equation}
\cD(U) = M \cD(V)
\end{equation}
with $\widetilde{\cG}(v) = \cG(Mu+b)$ and $\cD(V)$ the empirical covariance matrix between $v$ and $\widetilde{\cG}(v)$.
This implies the affine invariance of the gradient-free formulation (\ref{eq:SDE_DF}).
\end{proof}
\begin{remark}[Path-wise versus distributional affine invariance]
	Definition \ref{def:affine_invariance} is based on path-wise affine invariance
	at the level of the SDE (\ref{eq:gradient_methods}). Path-wise invariance implies affine invariance
	of the associated time-marginal distributions $\pi_t^{(N)}$, that is, affine invariance of the implied Fokker--Planck
	equation. The converse is not true, in general.
\end{remark}

%
%
\subsection{Geometric properties and gradient flow structure} \label{sec:gradient_flow_structure}
%
%


In this section, we place the dynamics \eqref{eq:SDE} in a geometric context, viewing (a suitable subset of) $\mathbb{R}^{D \times N}$ as a Riemannian manifold when equipped with an appropriate metric tensor. This approach has been pioneered in \cite{sr:GC11}; we also recommend the review paper \cite{livingstone2014information}. Leveraging this perspective, we show that the evolution induced by  \eqref{eq:SDE} on the set of smooth PDFs can be interpreted as a gradient flow in the sense of \cite{sr:JKO98}. In the limit as $N \rightarrow \infty$ we formally recover the Kalman--Wasserstein geometry introduced in \cite{sr:GIHLS19}.

We restrict our attention to the case $N > D + 1$ in this section, when the dynamics \eqref{eq:SDE} is ergodic on the set
$E$, as defined in \eqref{eq:invertible set}, according to Proposition \ref{prop:ergodicity}. Extending the framework to the case when $N \le D+1$ is subject of ongoing work. We now turn $E$ into a $D \times N$-dimensional Riemannian manifold. Denoting the $\gamma$-th coordinate of the $i$-th particle by $U^{(\gamma,i)}$, we introduce the metric tensor
\begin{equation}\label{eq:metric tensor}
	g  = \sum_{i=1}^N \sum_{\gamma,\sigma =1}^D \cC^{-1}_{\gamma \sigma} \mathrm{d}U^{(\gamma,i)}\, \mathrm{d}U^{(\sigma,i)},
\end{equation}



\noindent
In what follows, we will denote by $\mathrm{d}\text{vol}_g$ the Riemannian volume, by $\nabla_g$ the Riemannian gradient, by $(W^g_t)_{t \ge 0}$ Riemannian Brownian motion and by $d_g$ the geodesic distance on $(E,g)$.  For more details, we refer to \cite{hsu2002stochastic,lee2006riemannian} and, in the context of computational statistics, to \cite{livingstone2014information}. Using these objects induced by $g$, both the SDE \eqref{eq:SDE} and the corresponding Fokker--Planck equation \eqref{eq:FPE_linear} admit a compact formulation:
	\begin{proposition}[Riemmanian interpretation of ALDI]
	Let $\pi^{(N),g}$ denote the density of $\pi^{(N)}$ with respect to the Riemannian volume, that is,
	$\pi^{(N),g} \, \mathrm{d}\mathrm{vol}_g = \pi^{(N)}\, \mathrm{d} U$ and $\pi_\ast^{(N),g} \, \mathrm{d}\mathrm{vol}_g = \pi^{(N)}_\ast\,
	\mathrm{d} U$. Then the dynamics  \eqref{eq:SDE} can be written in the form
	\begin{equation}
	\label{eq:geometric formulation}
	\mathrm{d}U_t =  \nabla^g_U \log \pi_\ast^{(N),g}(U_t) \, \mathrm{d}t + \sqrt{2}\, \mathrm{d}W_t^g,
	\end{equation}
and the Fokker--Planck equation \eqref{eq:FPE_linear} can be written in the form
\begin{equation} \label{eq:FPE_geometric}
\partial_t \pi^{(N),g}_t = \nabla^g_U \cdot \left( \pi^{(N),g}_t \,\nabla^g_U
\frac{\delta {\rm KL}(\pi^{(N)}_t| \pi^{(N)}_\ast)}{\delta \pi^{(N)}_t}\right).
\end{equation}
\end{proposition}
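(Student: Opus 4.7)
The plan is to recognise that ALDI, read on $(E, g)$, is exactly the canonical overdamped Langevin diffusion targeting $\pi_\ast^{(N),g}$, so that (\ref{eq:geometric formulation}) and (\ref{eq:FPE_geometric}) become coordinate-free restatements of (\ref{eq:SDE}) and (\ref{eq:FPE_linear}), respectively. First I would unpack the metric: from (\ref{eq:metric tensor}), the matrix of $g$ is block-diagonal in the particle index with each block equal to $\cC^{-1}$, so $g^{-1}$ has the same block structure with blocks $\cC$, and $\sqrt{|g|} = |\cC|^{-N/2}$. The relation $\pi^{(N),g}\,\mathrm{d}\mathrm{vol}_g = \pi^{(N)}\,\mathrm{d}U$ then gives $\pi^{(N),g} = |\cC|^{N/2}\pi^{(N)}$ (and similarly for $\pi_\ast^{(N),g}$), so the ratio $\pi_t^{(N),g}/\pi_\ast^{(N),g}$ coincides with $\pi_t^{(N)}/\pi_\ast^{(N)}$ and the variational derivatives of the KL divergence in (\ref{eq:FPE_geometric}) and (\ref{eq:FPE_linear}) are the same function of $U$.

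Given this, (\ref{eq:FPE_geometric}) follows by a direct change of variables. Using the coordinate expressions $\nabla^g\cdot X = |g|^{-1/2}\partial_\alpha(|g|^{1/2}X^\alpha)$ and $(\nabla^g f)^\alpha = g^{\alpha\beta}\partial_\beta f$, together with $|g|^{1/2}\pi_t^{(N),g} = \pi_t^{(N)}$ and the block structure of $g^{-1}$, the right-hand side of (\ref{eq:FPE_geometric}) collapses to $|g|^{-1/2}\sum_i \nabla_{u^{(i)}}\cdot\bigl(\pi_t^{(N)}\,\cC\,\nabla_{u^{(i)}}\tfrac{\delta\mathrm{KL}}{\delta\pi_t^{(N)}}\bigr)$; since $\partial_t\pi_t^{(N),g} = |g|^{-1/2}\partial_t\pi_t^{(N)}$, the prefactor cancels and one recovers (\ref{eq:FPE_linear}).

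For (\ref{eq:geometric formulation}), I would expand its right-hand side in Cartesian coordinates and match it term by term against (\ref{eq:SDE}). The noise $\sqrt{2}\,\mathrm{d}W_t^g$ reduces to $\sqrt{2}\,\cC^{1/2}(U_t)\,\mathrm{d}W_t^{(i)}$ in the $i$-th block, since the block-diagonal $g^{-1}$ admits the non-symmetric square root $\cC^{1/2}$ per block. The drift has two contributions. The Riemannian gradient in the $i$-th block is $(\nabla^g\log\pi_\ast^{(N),g})^{(i)} = \cC\nabla_{u^{(i)}}\bigl(-\sum_j\Phi(u^{(j)}) + \tfrac{N}{2}\log|\cC|\bigr)$, which by (\ref{eq:drift_identity}) equals $-\cC\nabla_{u^{(i)}}\Phi(u^{(i)}) + (u^{(i)}-m)$. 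The It\^o drift of Riemannian Brownian motion, $b^{(\beta,i)} = |g|^{-1/2}\partial_{(\alpha,j)}\bigl(|g|^{1/2}g^{(\alpha,j)(\beta,i)}\bigr)$, expands to $\nabla_{u^{(i)}}\cdot\cC - \tfrac{N}{2}\cC\nabla_{u^{(i)}}\log|\cC| = \nabla_{u^{(i)}}\cdot\cC - (u^{(i)}-m)$. The two $(u^{(i)}-m)$ pieces cancel, leaving $-\cC\nabla_{u^{(i)}}\Phi + \nabla_{u^{(i)}}\cdot\cC$, and matching against (\ref{eq:SDE}) reduces to verifying
\begin{equation*}
\nabla_{u^{(i)}}\cdot\cC(U) = \frac{D+1}{N}\bigl(u^{(i)} - m(U)\bigr),
\end{equation*}
which I would establish by direct differentiation of $\cC_{\alpha\beta} = \tfrac{1}{N}\sum_k(u^{(k)}_\alpha-m_\alpha)(u^{(k)}_\beta-m_\beta)$, using $\partial_{u^{(\alpha,i)}}(u^{(k)}_\gamma - m_\gamma) = \delta_{ki}\delta_{\alpha\gamma} - \tfrac{1}{N}\delta_{\alpha\gamma}$ and $\sum_k(u^{(k)}-m)=0$.

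The main obstacle is the It\^o--Stratonovich bookkeeping: the ALDI coefficient $(D+1)/N$ does not come from a single source but is assembled from two pieces, one arising from the Jacobian $|\cC|^{N/2}$ relating $\pi^{(N)}$ to $\pi^{(N),g}$ and one coming from the non-constancy of $g$. The cancellation succeeds precisely because the log-determinant correction $-\tfrac{D+1}{2}\log|\cC|$ built into the potential (\ref{eq:potential}) is exactly calibrated to make ALDI the Riemannian Langevin SDE for $\pi_\ast^{(N),g}$.
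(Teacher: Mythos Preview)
Your proposal is correct and follows essentially the same route as the paper: both reduce the SDE statement to the divergence identity $\nabla_{u^{(i)}}\cdot\cC(U) = \tfrac{D+1}{N}(u^{(i)}-m(U))$, and both handle the Fokker--Planck statement via the coordinate formulas for $\nabla^g\cdot$ and $\nabla^g$ together with $\pi^{(N),g} = |g|^{-1/2}\pi^{(N)}$. The only notable difference is that the paper outsources the coordinate expression of the Riemannian Langevin drift to an external reference (equations (46)--(47) of \cite{livingstone2014information}), whereas you derive it from scratch, making explicit the cancellation of the two $(u^{(i)}-m)$ contributions coming from the volume Jacobian $|\cC|^{N/2}$ and from the It\^o drift of $\sqrt{2}\,W^g$; this is more self-contained and illuminates why the correction term in \eqref{eq:potential} is exactly the one needed.
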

\begin{remark}
Note that the Kullback--Leibler divergence and its functional derivative depend on the measures but not on the respective densities, in contrast to the Onsager operator \cite{machlup1953fluctuations,mielke2016generalization,ottinger2005beyond} $
\phi \mapsto -\nabla^g_U \cdot \left( \pi^{(N),g}_t \,\nabla^g_U  \phi \right)
$.
\end{remark}
\begin{proof}
Using the results from \cite{livingstone2014information}, in particular the equations (46)-(47),
the proof of the first statement reduces to verifying that
\begin{equation}
\label{eq:geometric correction}
\partial_J g^{IJ} = \frac{D+1}{N} \left(u^{(i)} - m(U)\right)_\gamma,
\end{equation}
where $g^{IJ}$ stands for the components of the inverse of $g$, and we have used the notation $I = (\gamma,i)$ and
$J=(\sigma,j)$. Furthermore, we apply  Einstein's summation convention here and in the remainder of this proof.
The statement \eqref{eq:geometric correction} follows directly from the definition of $g$ and the identity \cite{sr:NR19} 
\begin{equation}
\label{eq:div calculations}
\nabla_{u^{(i)}} \cdot \cC(U) = \frac{D+1}{N} (u^{(i)}-m(U))
\end{equation}
giving rise to the drift correction (\ref{eq:correction_term}). Indeed, together with the coordinate expressions
\begin{equation}
\label{eq:nabla formulas}
\nabla^g_U \cdot f = \frac{1}{\sqrt{\vert g \vert}} \partial_I \left(\sqrt{\vert g \vert} f^I \right), \quad (\nabla^g_U V)^I = g^{IJ} \partial_J V
\end{equation}
for vector-valued functions $f$ and scalar-valued $V$,
the result follows by direct substitution.

For the second statement, note that $\mathrm{d}\mathrm{vol}_g = \sqrt{\vert g \vert} \, \mathrm{d}U$, and  hence $\pi^{(N),g} = \vert g \vert^{-1/2} \pi^{(N)}$. \end{proof}

%
%
%
%

To exhibit the gradient flow structure, we recall that the natural quadratic Wasserstein distance between probability measures defined on $(E,g)$ is given by
\begin{equation}
\label{eq:W2_g}
\mathcal{W}_g^2\left(\mu^{(N)},\nu^{(N)}\right) = \inf_{\gamma \in \Pi \left(\mu^{(N)},\nu^{(N)} \right)} \int_{E \times E} d^2_g(U,V) \,
\mathrm{d}\gamma (U,V),
\end{equation}
where $\Pi\left(\mu^{(N)},\nu^{(N)}\right)$ denotes the set of probability measures on $E \times E$ with marginals $\mu^{(N)}$ and $\nu^{(N)}$. It is well-known that the evolution \eqref{eq:FPE_geometric} can be interpreted as gradient flow dynamics of the Kullback--Leibler divergence on the set of probability measures equipped with the distance \eqref{eq:W2_g}, see  for instance \cite[Chapter 15]{villani2008optimal} or \cite{lisini2009nonlinear}. By the Benamou--Brenier formula \cite{benamou2000computational}, we have the representation
\begin{subequations}
	\label{eq:Wg}
\begin{align}
\mathcal{W}_g^2\left(\mu^{(N)},\nu^{(N)}\right) = \inf_{\{\pi_t,\Phi_t\}} \Bigg\{ & \int_0^1 \int_{E} g(\nabla^g_U \Phi_t,\nabla^g_U \Phi_t) \,\mathrm{d}\pi_t \,\mathrm{d}t:
\\
\label{eq:cont eq}
 & \partial_t \pi_t^g + \nabla^g_U \cdot(\pi_t^g \, \nabla^g_U \Phi_t)  = 0,\quad  \pi_0 = \mu^{(N)}, \,\, \pi_1 = \nu^{(N)} \Bigg\},
\end{align}
\end{subequations}
where the constraining continuity equation in \eqref{eq:cont eq} is to be interpreted in a weak form and we again denoted by $\pi^g$ the density of $\pi$ with respect to $\mathrm{d}\mathrm{vol}_g$.
In standard coordinates (using the definition \eqref{eq:metric tensor} as well as the formulas \eqref{eq:nabla formulas}) we see that
\begin{subequations}
	\begin{align}
\label{eq:ALDI distance}
\mathcal{W}_g^2\left(\mu^{(N)},\nu^{(N)}\right) = \inf_{\{\pi_t,\Phi_t\}} \Bigg\{ & \int_0^1 \int_{E} \nabla_U \Phi_t \cdot \cC\, \nabla_U \Phi_t \,\mathrm{d}\pi_t \mathrm{d}t:
\\
 & \partial_t \pi_t + \nabla_U \cdot(\pi_t \cC\, \nabla_U \Phi_t)  = 0,\,  \quad \pi_0 = \mu^{(N)}, \, \pi_1 = \nu^{(N)} \Bigg\}, 
\end{align}
\end{subequations}
revealing a close similarity with the Kalman--Wasserstein distance (here denoted by $\mathcal{W}_{\mathrm{Kalman}}$) introduced in \cite{sr:GIHLS19}. Indeed, let us choose $\mu^{(N)} := \otimes_{i=1}^N \mu^{(i)}$ and $\nu^{(N)}:=\otimes_{i=1}^N \nu^{(i)}$, the product measures on $\mathbb{R}^{D \times N}$ associated to $\mu,\nu \in \mathcal{P}(\mathbb{R}^D)$, where $\mu^{(i)}$ and $\nu^{(i)}$, $i=1,\ldots,N$ are understood to be identical copies of $\mu$ and $\nu$. We formally expect that
\begin{equation}
\frac{1}{N}\mathcal{W}_g^2 \left( \mu^{(N)},  \nu^{(N)} \right) \xrightarrow{N\rightarrow \infty} \mathcal{W}_{\mathrm{Kalman}}(\mu,\nu),
\end{equation}
using that $\cC(U) \approx C(\pi)$ for sufficiently large $N$, where $C(\pi)$ was defined in \eqref{eq:theoretical_cov}. A rigorous passage from $\mathcal{W}_g$ to the Kalman--Wasserstein
distance might be a rewarding direction for future research; we note that a similar analysis (relating the gradient flow structures associated to a finite particle system and its mean-field limit) has been carried out recently in \cite{carrillo2019proof}.

\begin{remark}[Gradient flow structure of the ensemble Kalman--Bucy filter] \label{rem:EnKBF_GFS}
Taking the formal mean-field limit of the ensemble Kalman--Bucy filter (\ref{eq:EnKBF}) leads to the following evolution equation in the
marginal densities $\pi_t$:
\begin{equation} \label{eq:FPE_EnKBF}
\partial_t \pi_t = \nabla_u \cdot \left( \pi_t \,C(\pi_t) \,\nabla_u \frac{\delta \mathcal{F}_{\rm EnKBF}(\pi_t)}{\delta \pi_t}\right)
\end{equation}
with potential
\begin{equation} \label{eq:EnKBF_pot}
\mathcal{F}_{\rm EnKBF}(\pi) = \frac{1}{4} \int_{\mathbb{R}^D} \lVert y_{\rm obs}-\cG(u) \rVert_{R}^2\, \pi(u)\,{\rm d}u
+ \frac{1}{4} \lVert y_{\rm obs}-\mathbb{E}_{\pi}[\cG(u)] \rVert_{R}^2,
\end{equation}
which arises naturally from (\ref{eq:EnKBF_potential}) in the limit $N\to \infty$ \cite{sr:cotterreich,sr:reichcotter15}.
Note that (\ref{eq:FPE_EnKBF}) is exactly of the
form (\ref{eq:NLFPE}) with the Kullback--Leibler divergence being replaced by the potential (\ref{eq:EnKBF_pot}).
Its gradient flow structure in the space of probability measures has been first discussed in \cite{sr:cotterreich,sr:reichcotter15} and
is equivalent to the Kalman--Wasserstein gradient flow structure introduced in \cite{sr:GIHLS19}. The mean-field limit of
the EKI  \cite{sr:SS17,KovachkiStuart2018_ensemble} also fits within this framework with the potential $\mathcal{F}_{\rm EnKBF}(\pi)$ replaced by
\begin{equation}
\mathcal{F}_{\rm EKI}(\pi) = \frac{1}{2} \int_{\mathbb{R}^D} \lVert y_{\rm obs}-\cG(u) \rVert_{R}^2\, \pi(u)\,{\rm d}u .
\end{equation}
The affine invariance of both the EnKBF and EKI follows along the lines of Lemma \ref{lem:affine_invariance}. As for the finite ensemble
size formulations, one expects a slower decay of $\mathcal{F}_{\rm EnKBF}(\pi_t)$ compared to $\mathcal{F}_{\rm EKI}(\pi)$.
\end{remark}

%
%
%

%
%
\section{Numerical experiment: A PDE constrained inverse problem} \label{sec:numerics}
%
%

We consider the inverse problem of determining the permeability field $a(x)>0$ in the elliptic partial differential equation (PDE)
\begin{equation} \label{eq:model}
-\partial_x (a(x) \partial_x p(x)) = f(x), \qquad x \in \Omega= [0,2\pi),
\end{equation}
from $K=10$ observed grid values
\begin{equation} \label{eq:num_obs}
y_j = p({\rm x}_j) + \eta_j, \qquad {\rm x}_j = \frac{2\pi (j-1)}{K},
\end{equation}
$j=1,\ldots,K$, of the pressure field $p$ for a given forcing $f$. Both $p$ and $f$ are assumed to integrate to zero over the domain
$\Omega$. The measurement errors $\eta_j$ in (\ref{eq:num_obs}) are i.i.d.~Gaussian with mean zero
and variance $\sigma_R = 10^{-4}$.  A related 2-dimensional Darcy flow problem has been studied in \cite{sr:GIHLS19}.
In this paper, we restrict the simulations to the 1-dimensional formulation (\ref{eq:model}) for computational simplicity.

This infinite-dimensional problem is made finite-dimensional by introducing a computational grid
\begin{equation}
x_i = \frac{2\pi i}{D}, \qquad i =0,\ldots,D-1,
\end{equation}
with $D = 50$ grid points. Hence (\ref{eq:model}) gets replaced by the finite-difference formulation
\begin{equation} \label{eq:num_model}
\frac{a_{i+1/2}(p_{i+1}-p_i)-a_{i-1/2}(p_i-p_{i-1})}{h^2} = -f_i,
\end{equation}
$i=1,\ldots,D$. Here $h = 2\pi/D$ denotes the mesh size and $p_i \approx p(x_i)$, etc. We also make use of the
periodicity and set $p_D = p_0$ as well as $f_D=f_0$ .

Since the permeability field should be non-negative, we set
\begin{equation} \label{eq:num_truth0}
a_{i-1/2} = \exp ( u_i  )
\end{equation}
for $i=1,\ldots,D$. 
The computational forward problem is now given by the solution $\{p_i\}_{i=0}^{D-1}$ to (\ref{eq:num_model}) for given $\{f_i\}_{i=0}^{D-1}$
and $\{u_i\}_{i=1}^{D}$ and its restriction to the observation grid $\{{\rm x}_j\}_{j=1}^{K}$. 
We denote this map by $\cG(u)$, suppressing the dependence on the forcing given by
\begin{equation}
f_i = \exp \left(- \frac{(2x_i-L)^2}{40}\right)-c_f,
\end{equation}
where $c_f>0$ is chosen such that the forcing has mean zero.
The measurement error covariance matrix is given by $R = \sigma_R I_{K\times K}$. This completes
the description of our forward model (\ref{eq:IP}). 

The prior distribution on $u\in \mathbb{R}^D$ is assumed to be Gaussian with mean zero and covariance matrix $P_0$
defined by
\begin{equation}
P_0^{-1} = 4h \left(\frac{\mu}{D}1_D1_D^{\rm T} - \Delta_h \right)^2,
\end{equation}
where $\Delta_h$ denotes the standard second-order finite-difference operator 
over $\Omega$ with mesh-size $h$ and periodic  boundary conditions, that is, the operator defined by the left-hand side of (\ref{eq:num_model}) with $a_{i\pm1/2} = 1$. The parameter $\mu>0$ is set to $\mu=10^{2}$ leading to a penalty on
the (spatial) mean of $u= \{u_i\}_{i=1}^D$ to be close to zero.

The observations (\ref{eq:num_obs}) are generated numerically by solving (\ref{eq:num_model}) with the 
reference permeability field given by
\begin{equation} \label{eq:num_truth1}
a^\dagger_{i-1/2}  =  \exp ( u^\dagger_i  ), 
\end{equation}
where
\begin{equation}
u^\dagger_i = \frac{1}{2}\sin (x_i - h/2) 
\end{equation}
for $i=1,\ldots,D$, and setting
\begin{equation}
y_j = p_l + \eta_j, \qquad l = \frac{D}{K}\,j = 5j, \qquad \eta_j \sim \cN(0,\sigma_R),
\end{equation}
$j = 1,\ldots,K$. 

We implemented the gradient-based ALDI formulation (\ref{eq:SDE}) as well as the gradient-free ALDI formulation (\ref{eq:SDE_DF})
using the Euler--Maruyama method with step-size $\Delta t = 0.01$ over a time interval $t\in [0,20]$. In line with \cite{sr:GIHLS19} we refer to the ALDI implemented without the correction term (\ref{eq:correction_term}) as the ensemble Kalman sampler (EKS). The ensemble sizes were taken as $N = 25,52,100,200$. Except for the smallest ensemble size, all other choices resulted in non-singular empirical covariance matrices $\cC(U_t)$.

We compare the simulation results based on the estimation bias
\begin{equation} \label{eq:BIAS}
{\rm BIAS} = \frac{h}{T} \int_{\tau}^{\tau +T} \|m({U}_t) - u^\dagger\|^2 \,{\rm d}t
\end{equation}
and the ensemble spread
\begin{equation} \label{eq:SPREAD}
{\rm SPREAD} = \frac{h}{T} \int_\tau^{\tau +T} \mbox{trace} \left(\cC(U_t)\right)
\end{equation}
computed along numerical solutions for $\tau = 12$ and $T=8$. Each experiment was repeated ten times to reduce
the impact of random effects. The results can be found in Tables \ref{table1} and \ref{table2}, respectively. It can be
seen that the correction term has a profound impact on both the bias as well as the ensemble spread for the smallest
ensemble size $N=25$. This effect is largely diminished for the largest ensemble size of $N=200$.  We also find
that the gradient-free implementations yield results which are essentially indistinguishable from those based on the exact
gradient while being computationally much more efficient. Finally, the results for ALDI indicate that it is entirely
sufficient to implement it with $N = D+2 = 52$ particles; the minimum size required for ergodicity to hold.

\begin{table}[h!]
\begin{center}
\begin{tabular}{l|c|c|c|c}
\textbf{N} & \textbf{gf-EKS} & \textbf{gf-ALDI} & \textbf{g-EKS} & \textbf{g-ALDI}\\
\hline
25 & 0.5035 & 0.4113 & 0.4940 & 0.4042 \\
52 & 0.3748 & 0.3028 & 0.3706 & 0.2957 \\
100 & 0.3215 & 0.3070 & 0.3166 & 0.3016 \\
200 & 0.3088 & 0.3081 & 0.3030 & 0.3009
\end{tabular}
\end{center}
\caption{Computed estimation bias (\ref{eq:BIAS}) for ensemble sizes $N\in \{25,51,100,200\}$ and implementations of ALDI
and EKS as well as with exact gradient (g) and gradient-free (gf). } \label{table1}
\end{table}

\begin{table}[h!]
\begin{center}
\begin{tabular}{l|c|c|c|c}
\textbf{N} & \textbf{gf-EKS} & \textbf{gf-ALDI} & \textbf{g-EKS} & \textbf{g-ALDI}\\
\hline
25 & 0.0082 & 0.0724 & 0.0083 & 0.0738 \\
52 & 0.0135 & 0.0475 & 0.0134 & 0.0476 \\
100 & 0.0219 & 0.0457 & 0.0218 & 0.0457\\
200 & 0.0337 & 0.0453 & 0.0336 & 0.0453
\end{tabular}
\end{center}
\caption{As in Table \ref{table1}, but reporting the results for the ensemble spread (\ref{eq:SPREAD}).} \label{table2}
\end{table}

In order to provide a better insight into the impact of the correction term (\ref{eq:correction_term}) on the final
ensemble distributions we display results for $M=25$ and $M=200$ in Figures \ref{figure1} and \ref{figure2},
respectively.

\begin{figure}
\begin{center}
\includegraphics[width=0.45\textwidth,trim = 0 0 0 0,clip]{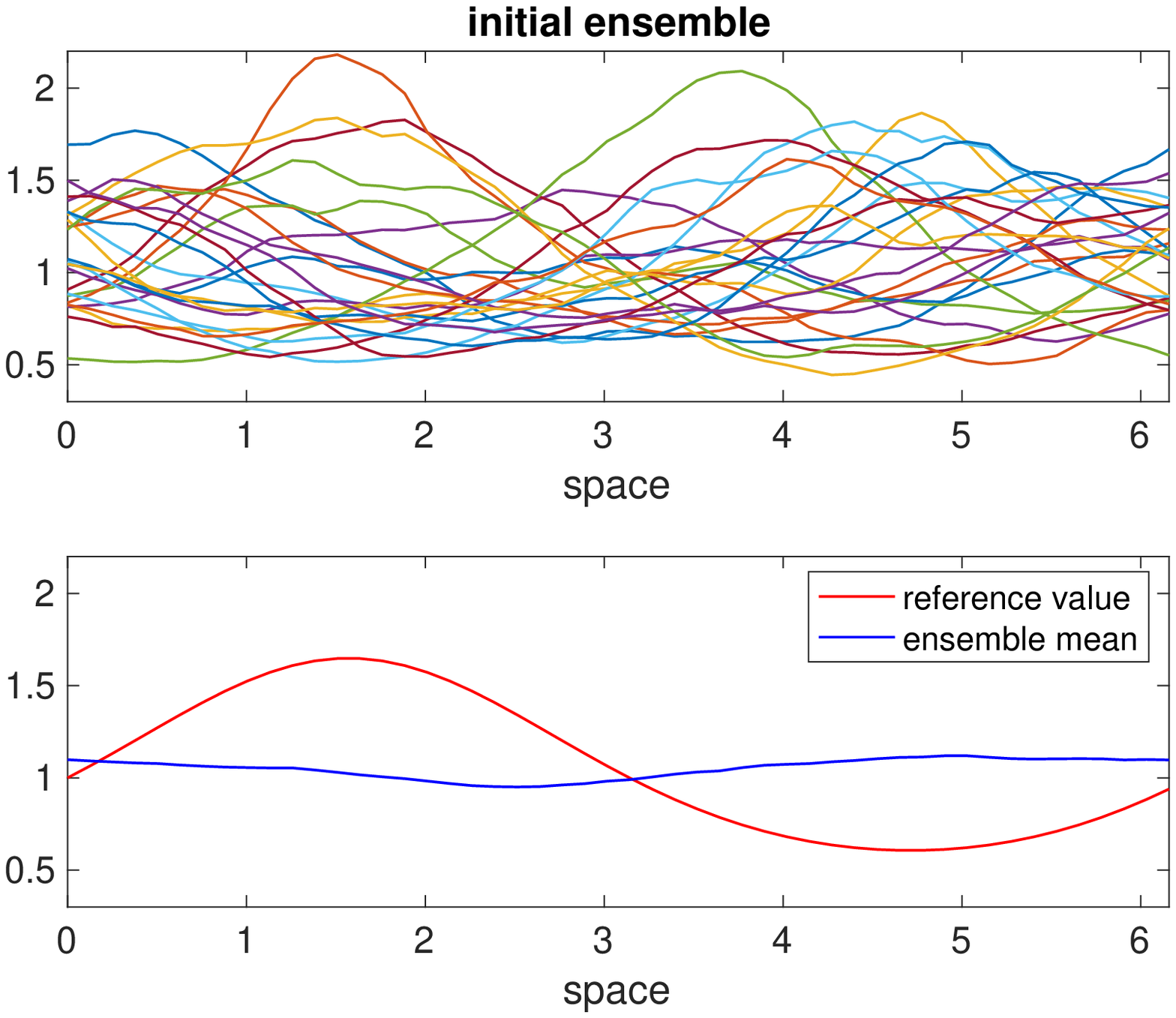} $\qquad$
\includegraphics[width=0.45\textwidth,trim = 0 0 0 0,clip]{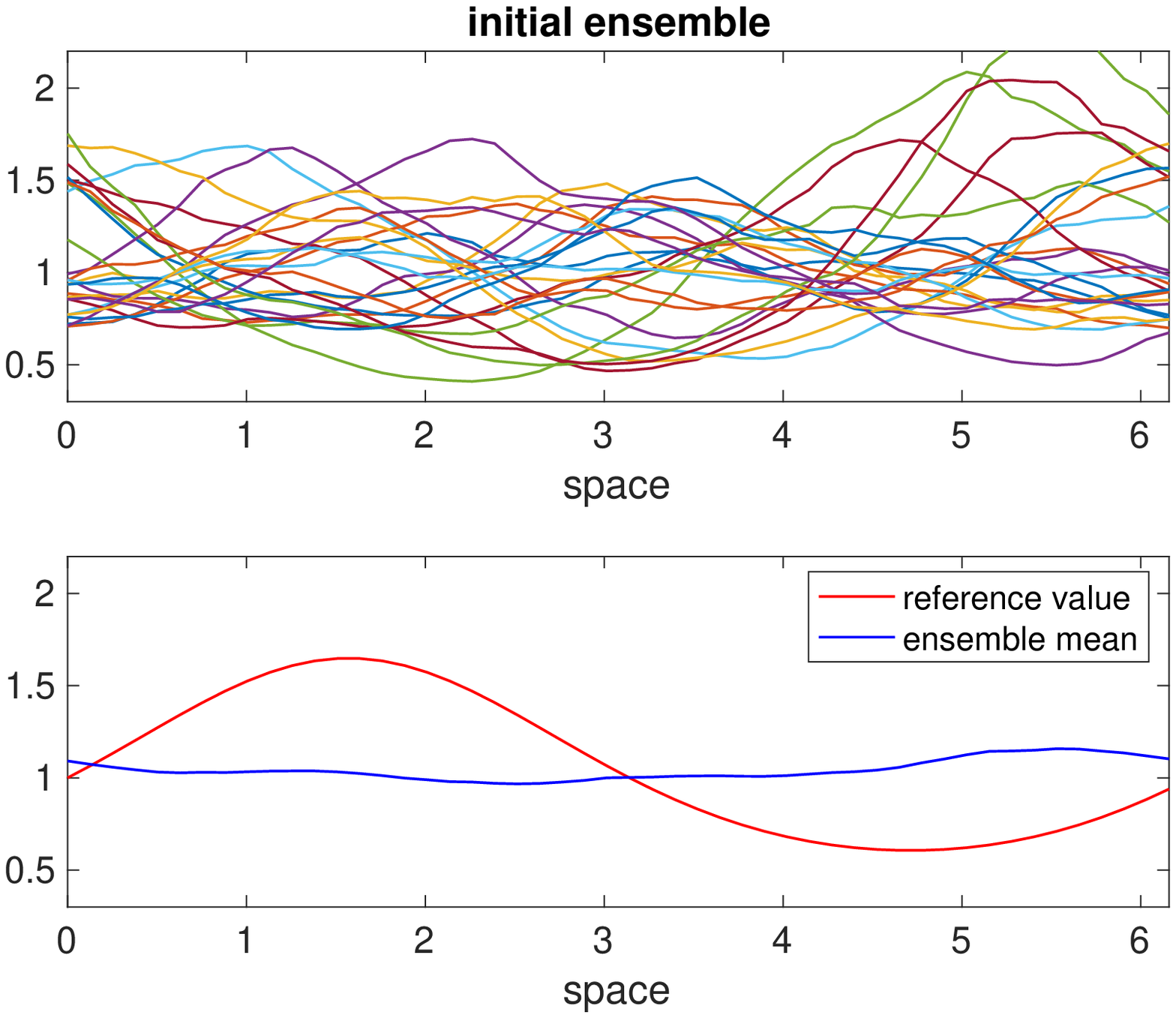} \\ \medskip
\includegraphics[width=0.45\textwidth,trim = 0 0 0 0,clip]{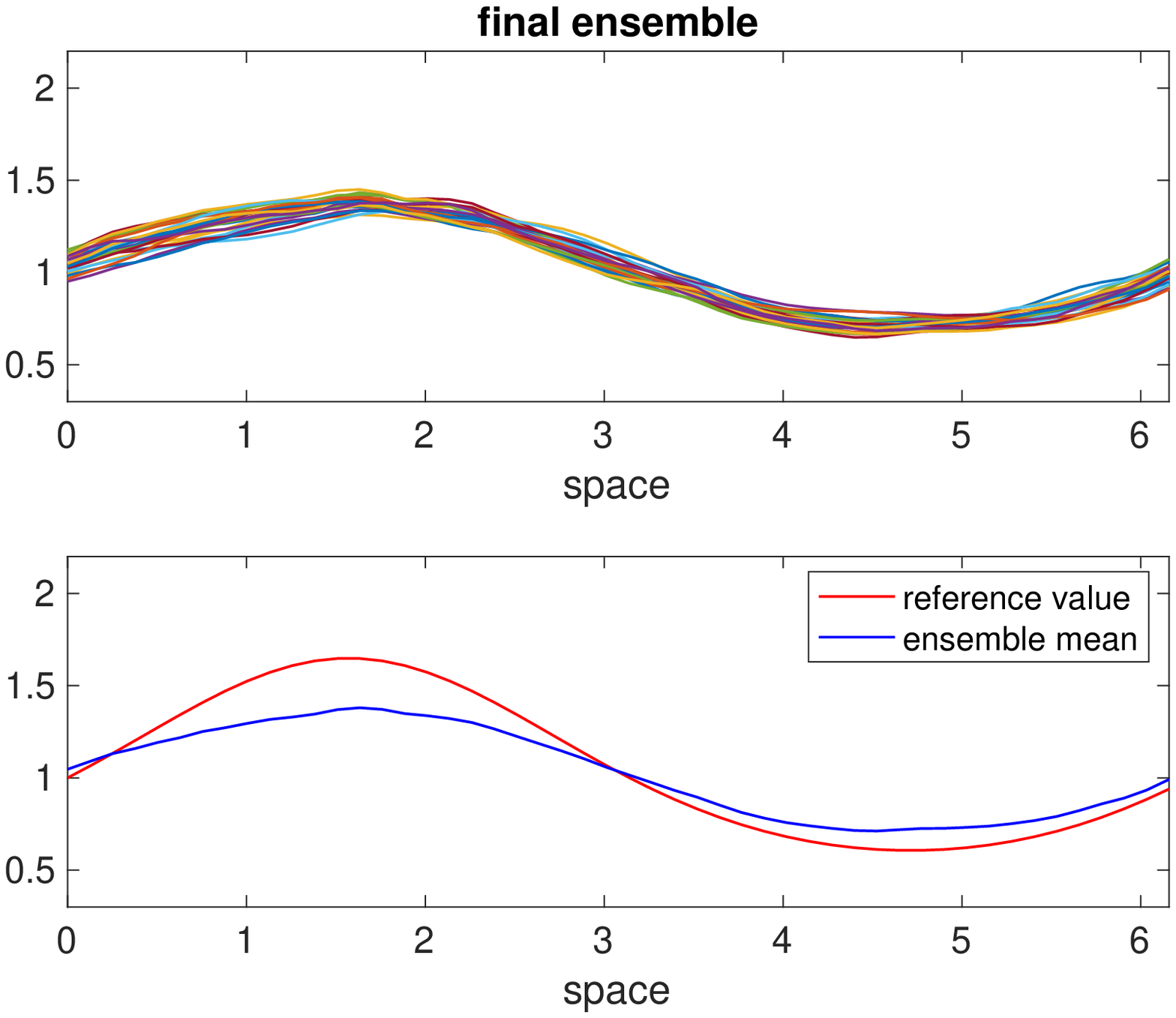} $\qquad$
\includegraphics[width=0.45\textwidth,trim = 0 0 0 0,clip]{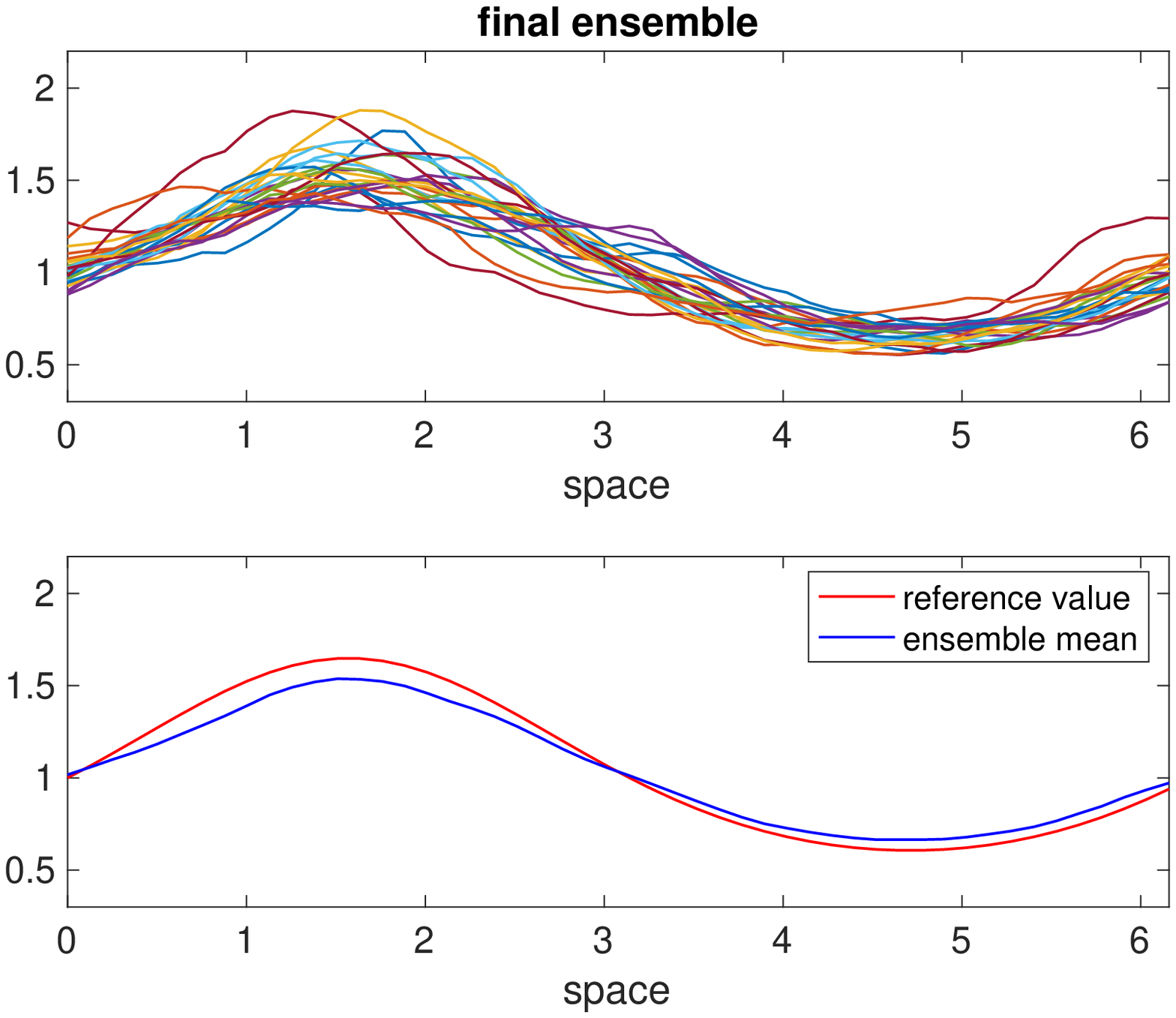}
\end{center}
\caption{Displayed are the initial (top row) and final (bottom row) ensembles  of the permeability fields $a(x) = \exp(u(x))$
for $N=25$. The left column is from the EKS while the right column is from the ALDI method.}
\label{figure1}
\end{figure}

\begin{figure}
\begin{center}
\includegraphics[width=0.45\textwidth,trim = 0 0 0 0,clip]{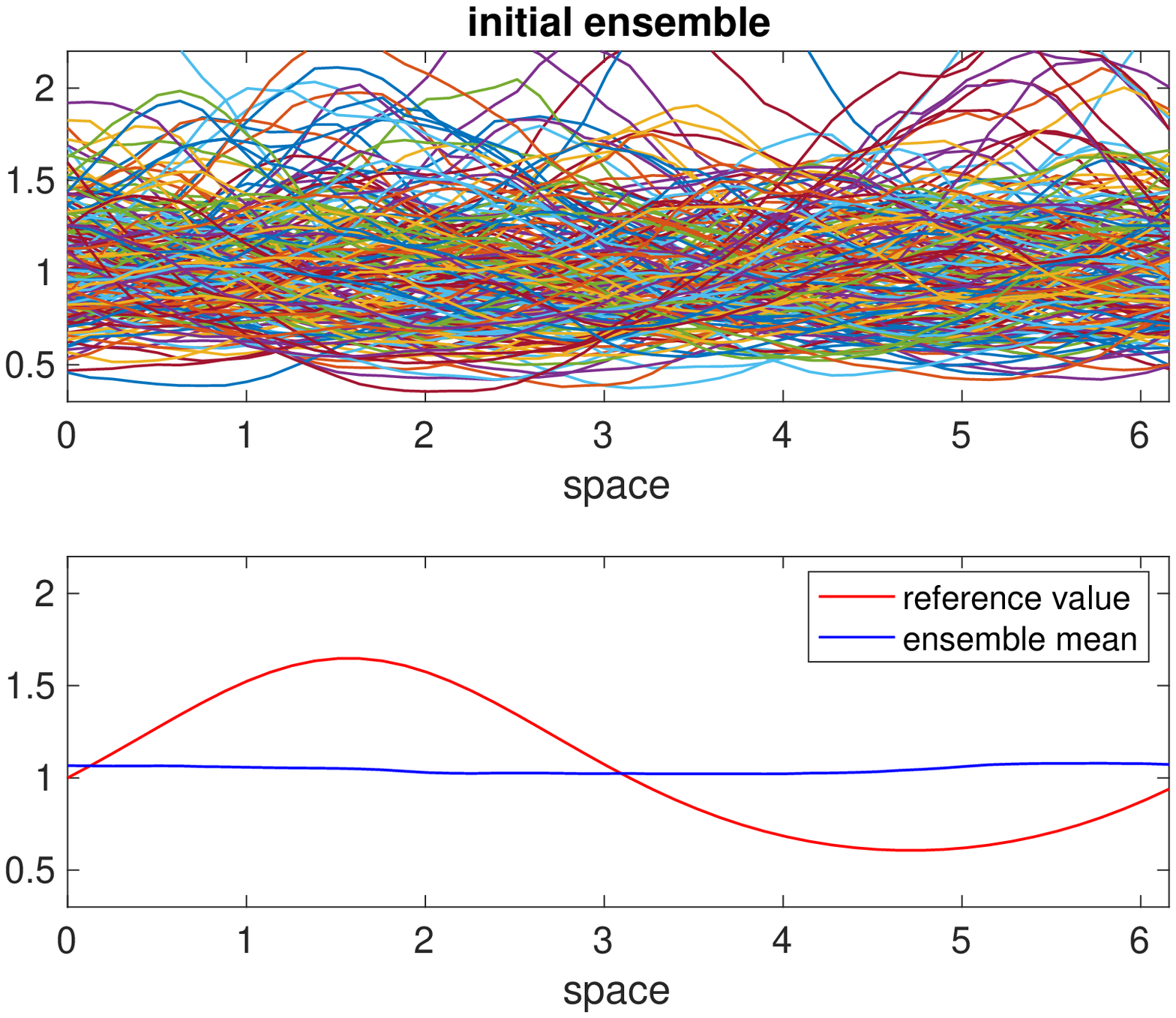} $\qquad$
\includegraphics[width=0.45\textwidth,trim = 0 0 0 0,clip]{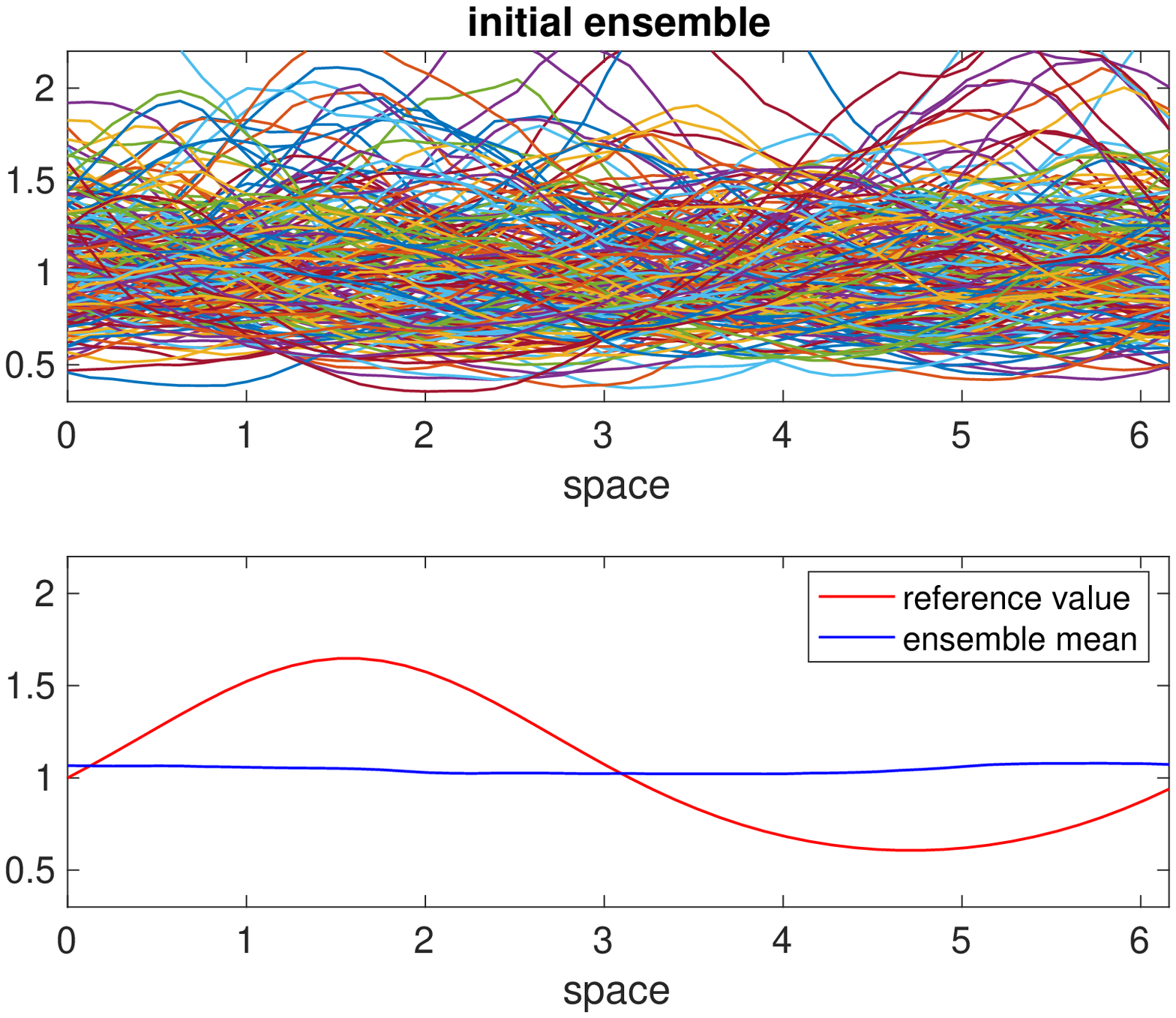} \\ \medskip
\includegraphics[width=0.45\textwidth,trim = 0 0 0 0,clip]{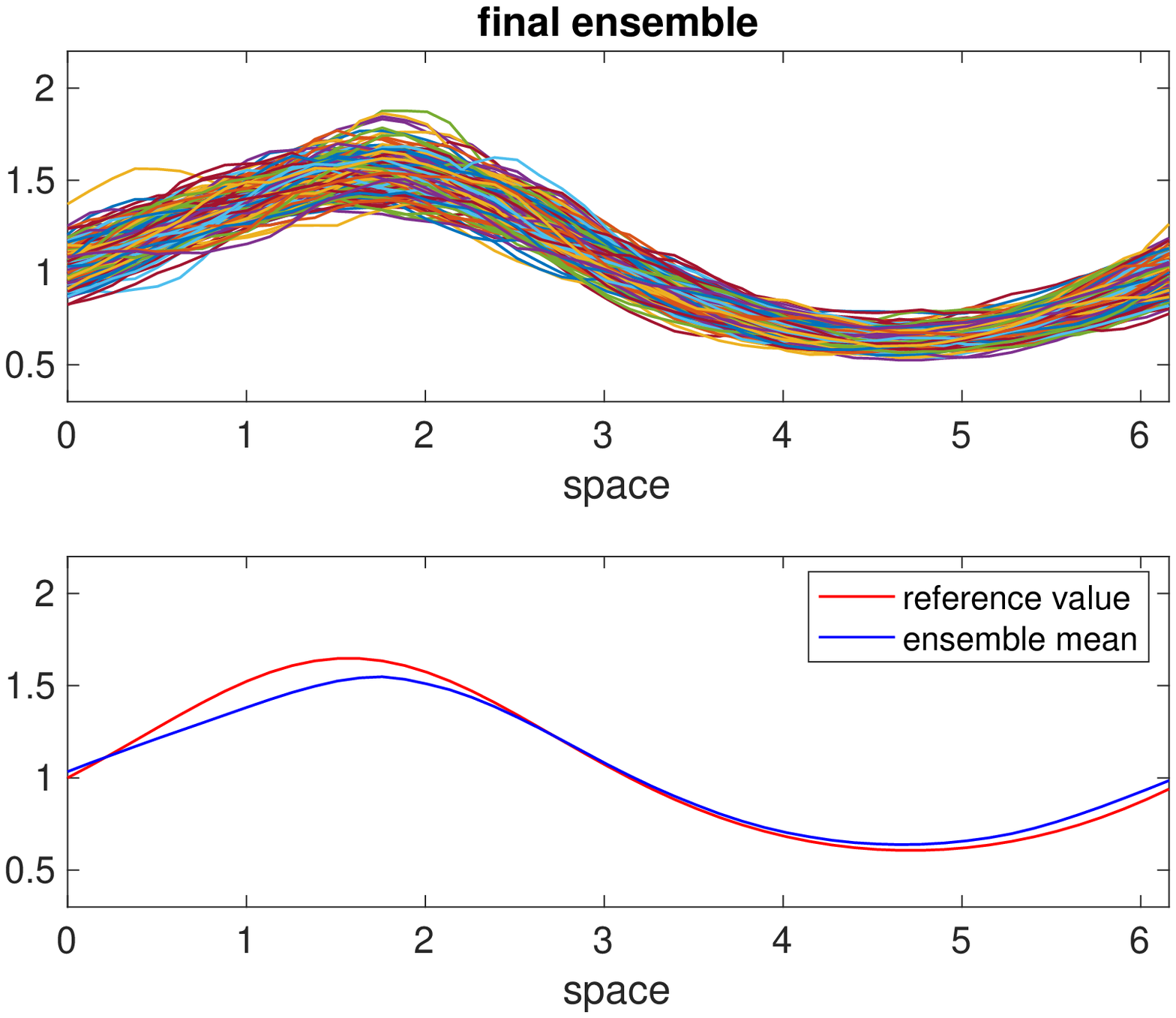} $\qquad$
\includegraphics[width=0.45\textwidth,trim = 0 0 0 0,clip]{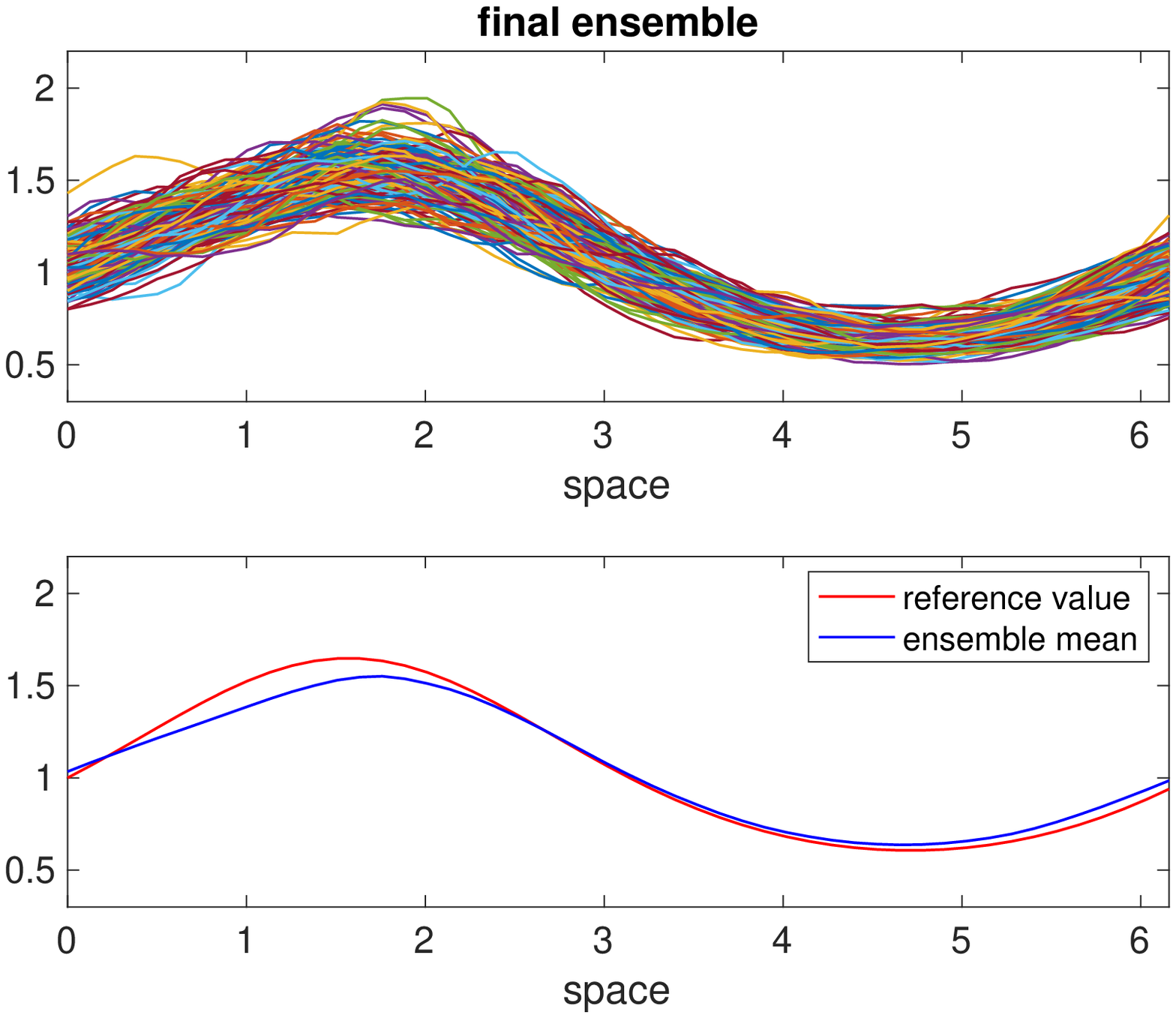}
\end{center}
\caption{As in Figure \ref{figure1}, except for ensemble size $N = 200$.}
\label{figure2}
\end{figure}

We conclude from this simple experiment that the correction term (\ref{eq:correction_term}) is required
for implementations of ALDI whenever the ensemble size is of the order of the dimension of the parameter space
or less. The experiments also confirm that gradient-free implementations can offer a computationally attractive
alternative to gradient-based implementations of ALDI.

%
\section{Conclusions}
%

We have proposed a finite ensemble size implementation of the Kalman--Wasserstein gradient flow formalism put forward in \cite{sr:GIHLS19},
which requires the inclusion of a correction term (\ref{eq:correction_term}) due to the multiplicative nature of the noise in the Langevin equations
(\ref{eq:SDE}) \cite{sr:NR19}. In addition to sampling from the desired target distribution, it has also been demonstrated that the equations of motion
are affine invariant. While ALDI can be used with $N\le D$ ensemble members, effectively leading to a linear subspace sampling method,
it has also been proven that $N>D+1$ and a non-singular initial empirical covariance matrix $\cC(U_0)$ ensure that $|\cC(
U_t)| \not= 0$ for all $t \ge 0$ and that the equations of motion (\ref{eq:SDE}) are ergodic with invariant measure $\pi^{(N)}_\ast$.
Further computational savings can be achieved through the gradient-free implementation (\ref{eq:SDE_DF}) for BIPs as introduced in Example
\ref{ex:BIP}. The effectiveness of gradient-free affine invariant sampling methods has been demonstrated for a Darcy flow inversion problem.
This example has also demonstrated the significance of the correction term for reducing estimation errors both for $N<D$ as well as for $N = \cO(D)$
implementations of the ALDI method (\ref{eq:SDE}).

A numerical issue which has not been studied in this paper is the choice of an efficient time-stepping method for ALDI. In particular,
adaptive and semi-implicit time-stepping methods might be necessary whenever the initial distribution $\pi_0$ is
not close to the target measure $\pi_\ast$. This issue has been studied for
the related continuous-time ensemble Kalman--Bucy filter in \cite{sr:akir11}. We also reemphasise that multi-model target distributions might
require localised empirical covariance matrices in (\ref{eq:SDE}) as first suggested in \cite{sr:LMW18} and further explored in
\cite{sr:RW19}.

While this paper has focused on a theoretical investigation and computational implementation of finite-sample size
interacting Langevin dynamics, we wish to point out that the Kalman--Wasserstein gradient flows proposed in
\cite{sr:cotterreich,sr:GIHLS19} have also become the focus of theoretical studies. We mention in particular
\cite{ding2019ensemble}, which provides a rigorous mean field limit with rates in Wasserstein-2 for the linear case, and \cite{carrillo2019wasserstein}, which studies the decay for the mean field limit in Wasserstein-2 in the linear case using
explicitly the dynamics of the covariance matrix.

\medskip \medskip

\noindent {\bf Acknowledgement.} This research has been partially funded by
Deutsche Forschungsgemeinschaft (DFG, German Science Foundation) - SFB 1294/1 - 318763901 and SFB 1114/2
235221301. AGI is supported by the generosity of Eric and Wendy Schmidt by
recommendation of the Schmidt Futures program, by Earthrise Alliance, by the
Paul G. Allen Family Foundation, and by the National Science Foundation (NSF
grant AGS‐-1835860). We would like to thank Christian B\"ar, Andrew Duncan,
Franca Hoffmann, Andrew Stuart, and Jonathan Weare for valuable discussions
related to the sampling methods proposed in this paper.

%
\appendix
%


\section*{Appendix: Proofs for non-degeneracy and ergodicity}

\begin{proof}[Proof of Proposition \ref{prop:linear FPE}]
The Fokker--Planck equation is given by
\begin{equation}
\partial_t \pi^{(N)}_t = \mathcal{L}^\dagger \pi^{(N)}_t,
\end{equation}
where $\mathcal{L}$ denotes the infinitesimal generator of \eqref{eq:SDE} and $\mathcal{L}^\dagger$ refers to its adjoint in $L^2(\mathbb{R}^{D \times N})$,
given by
\begin{subequations}
\begin{align}
\left( \cL^\dagger \pi^{(N)} \right) (U) &= \sum_{i=1}^N \nabla_{u^{(i)}} \cdot \left(\pi^{(N)}(U) \left\{\cC(U)
\nabla_{u^{(i)}} \Phi (u^{(i)}) -
\frac{D+1}{N}(u^{(i)}-m(U)\right\} \right) \\
& \qquad + \,  \sum_{i=1}^N \nabla_{u^{(i)}} \cdot \left\{\pi^{(N)}(U) \,\nabla_{u^{(i)}} \cdot \cC(U)
+ \cC(U) \nabla_{u^{(i)}} \pi^{(N)}(U) \right\},
\end{align}
\end{subequations}
see \cite[Chapter 4]{sr:P14} and \cite{sr:LMW18}. Here the divergence of the matrix-valued $\cC(U)$ is component-wise
given by (in terms of the notation introduced in Section \ref{sec:gradient_flow_structure})
\begin{equation}
\left\{ (\nabla_{u^{(i)}} \cdot \cC(U)) \right\}_{k} = \sum_{\gamma =1}^D \frac{\partial}{\partial U^{(\gamma,i)}}
\left\{\cC(U)\right\}_{k\gamma}, \qquad
k = 1,\ldots,D.
\end{equation}
An explicit calculation  \cite{sr:NR19} leads to (\ref{eq:div calculations}) and the Fokker--Planck operator $\cL^\dagger$ reduces to
\begin{subequations} \label{eq:FPO}
\begin{align}
\cL^\dagger \pi^{(N)} &= \sum_{i=1}^N \nabla_{u^{(i)}} \cdot \left(\pi^{(N)} \,\cC \left\{
\nabla_{u^{(i)}} \Phi + \nabla_{u^{(i)}} \log \pi^{(N)} \right\} \right) \\ &= \sum_{i=1}^N
\nabla_{u^{(i)}} \cdot \left( \pi^{(N)} \,\cC \,\nabla_{u^{(i)}} \log \frac{\pi^{(N)}}{\pi^{(N)}_\ast} \right)
\end{align}
\end{subequations}
from which the desired result follows since
\begin{equation}
\frac{\delta {\rm KL}(\pi^{(N)}| \pi^{(N)}_\ast)}{\delta \pi^{(N)}} = \log \frac{\pi^{(N)}}{\pi^{(N)_\ast}}.
\end{equation}
\end{proof}

\noindent
For the proof of Proposition \ref{prop:nondegeneracy} we recall the definition (\ref{eq:invertible set}) of the set
$E \subset \mathbb{R}^{D\times N}$.
We will use the potential $\cV$ defined in \eqref{eq:potential} as a Lyapunov function. The key calculation is summarised in the following lemma:
\begin{lemma}
	\label{lem:key lemma}
	There exists a constant $\gamma > 0$  such that
	\begin{equation}
	\label{eq:Lyapunov bound}
	(\mathcal{L} \cV)(U) \le \gamma  \cV(U),  \quad U \in E,
	\end{equation}
	where $\mathcal{L}$ is the generator of \eqref{eq:SDE}, that is, the $L^2(\mathbb{R}^{D \times N})$-adjoint of $\cL^\dagger$ as defined in (\ref{eq:FPO}).
\end{lemma}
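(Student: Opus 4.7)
The plan is to exploit the form \eqref{eq:modified SDE} of the dynamics, under which the infinitesimal generator acts on $\cV$ as
\[
\cL \cV = - \sum_{i=1}^N \nabla_{u^{(i)}} \cV \cdot \cC(U) \nabla_{u^{(i)}} \cV + \sum_{i=1}^N \Tr \bigl( \cC(U) \, \Hess_{u^{(i)}} \cV \bigr).
\]
The first sum is non-positive and can be discarded, so it remains to bound the trace term from above by a multiple of $\cV$. I would split $\cV = \sum_i \Phi(u^{(i)}) - \frac{D+1}{2} \log \lvert \cC(U) \rvert$ and handle the two contributions separately.

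The $\Phi$-part is straightforward: Assumption \ref{ass:V} gives $\Hess \Phi(u) \le c_2 I_{D \times D}$ outside $K$, and continuity of $\Hess \Phi$ on the compact set $K$ upgrades this to a global operator bound (possibly enlarging $c_2$). Hence $\sum_i \Tr(\cC \, \Hess \Phi(u^{(i)})) \le c_2 N \Tr \cC(U)$. The main work is the $\log \lvert \cC \rvert$-part. Here I would differentiate the identity $\cC \nabla_{u^{(i)}} \log \lvert \cC \rvert = \frac{2}{N}(u^{(i)} - m(U))$ once more in $u^{(i)}$, contract over the free index via the product rule, and apply the divergence identity \eqref{eq:div calculations}. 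The resulting Hessian-trace terms combine with the algebraic identity $\sum_{i=1}^N (u^{(i)} - m(U))^\top \cC^{-1} (u^{(i)} - m(U)) = N D$ to yield the explicit value
\[
\sum_{i=1}^N \Tr \bigl( \cC(U) \, \Hess_{u^{(i)}} \log \lvert \cC(U) \rvert \bigr) = \frac{2 D (N - D - 2)}{N},
\]
which is remarkably independent of $U$. Combining both pieces then gives $\cL \cV \le c_2 N \Tr \cC(U) - \frac{(D+1) D (N-D-2)}{N}$.

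To convert this into the required Lyapunov bound, I would use the quadratic lower bound on $\Phi$ from Assumption \ref{ass:V} (absorbing the compact region $K$ into a constant) to obtain $\Tr \cC(U) \le \frac{1}{c_1 N} \sum_i \Phi(u^{(i)}) + \mathrm{const}$. Next, the deterministic inequality $\lvert \cC(U) \rvert \le (\Tr \cC(U) / D)^D$ yields $-\log \lvert \cC(U) \rvert \ge -D \log \Tr \cC(U) + D \log D$; combined with the quadratic growth of $\Phi$, this shows that the logarithmic term in $\cV$ is dominated whenever $\sum_i \Phi$ is large, so that $\cV \ge \tfrac{1}{2} \sum_i \Phi - \mathrm{const}$ in that regime, while in the complementary bounded regime all quantities are controlled by constants. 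Chaining these estimates yields $\cL \cV \le \gamma \cV + C$ for explicit constants $\gamma,C > 0$, and this can be promoted to the stated form $\cL \cV \le \gamma \cV$ by shifting $\cV$ by a sufficiently large positive constant, an operation that leaves $\cL \cV$ unchanged and preserves the blow-up of $\cV$ on $\partial E$ needed for the non-degeneracy argument. The main technical obstacle is the Hessian-trace computation for $\log \lvert \cC(U) \rvert$: the index manipulations are delicate, and only because of the precise cancellation producing a $U$-independent constant does the subsequent Lyapunov estimate close cleanly.
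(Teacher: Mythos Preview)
Your argument is correct and very close in spirit to the paper's, but your decomposition of $\cL\cV$ differs in a small, instructive way. You write the drift in the form $-\cC\nabla_{u^{(i)}}\cV$ (from \eqref{eq:modified SDE}), which makes the first-order term $-\sum_i \nabla_{u^{(i)}}\cV\cdot\cC\nabla_{u^{(i)}}\cV$ manifestly nonpositive and hence disposable. The paper instead uses the drift $-\cC\nabla\Phi(u^{(i)})+\frac{D+1}{N}(u^{(i)}-m)$ and groups the correction with the second-order part into the divergence form $\sum_i\nabla_{u^{(i)}}\cdot(\cC\nabla_{u^{(i)}}\cV)$; its first-order term then contains a cross piece $-\frac{D+1}{N}\sum_i\nabla\Phi(u^{(i)})\cdot(u^{(i)}-m)$ that must be bounded separately via the growth conditions. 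The trade-off is that your cleaner drift term forces you into the genuine Hessian-trace computation $\sum_i\Tr(\cC\,\Hess_{u^{(i)}}\log|\cC|)=\tfrac{2D(N-D-2)}{N}$ (which you carried out correctly, using \eqref{eq:drift_identity}, \eqref{eq:div calculations} and $\sum_i(u^{(i)}-m)^{\rm T}\cC^{-1}(u^{(i)}-m)=ND$), whereas the paper only needs the simpler divergence $\sum_i\nabla_{u^{(i)}}\cdot\tfrac{2}{N}(u^{(i)}-m)$, which is the constant $\tfrac{2D(N-1)}{N}$ without any $\cC^{-1}$ appearing. Either way the $\log|\cC|$ contribution is a $U$-independent constant, the $\Phi$ contribution is bounded by $\Tr\cC\lesssim\cV_\Phi+\text{const}$, and the final passage from $\cL\cV\le\gamma\cV+\tilde C$ to $\cL\cV\le\gamma\cV$ via an additive shift (using that $\cL$ annihilates constants) is exactly what the paper does as well.
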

\begin{proof}
	It follows from (\ref{eq:FPO}) that the generator of \eqref{eq:SDE} takes the form
	\begin{equation}
	\label{eq:generator}
	(\mathcal{L} \cV)(U) = -\sum_{i=1}^N \nabla_{u^{(i)}} \Phi(u^{(i)}) \cdot \cC(U) \nabla_{u^{(i)}}\cV(U) + \sum_{i=1}^N \nabla_{u^{(i)}} \cdot \left( \cC(U) \nabla_{u^{(i)}} \cV(U) \right).
	\end{equation}
	For convenience, let us introduce the notation
	\begin{equation}
	\cV_{\cC}(U) = - \frac{D+1}{2}\log | \cC(U)|, \qquad \cV_\Phi(U) =  \sum_{i=1}^N \Phi(u^{(i)}).
	\end{equation}
	Since $\mathcal{L}$ vanishes on constants, \eqref{eq:Lyapunov bound} is  equivalent to  $\mathcal{L} \cV \le \gamma \cV + \tilde{C}$ for some constant $\tilde{C}$. Here and in the following, $\tilde{C}$ denotes a generic constant that can change from line to line. Furthermore, by the growth condition on $\Phi$ there exists a constant $\tilde{C}$ such that $-2\cV_\cC \le \cV_\Phi + \tilde{C}$. Therefore, it is sufficient to show the bound $\mathcal{L}\cV \le \tilde{C} (1 + \cV_\Phi)$. In the remainder of the proof, we achieve the latter bound  term-wise for the contributions in \eqref{eq:generator}.

	First note that
	\begin{equation}
	\nabla_{u^{(i)}} \log | \cC(U)| = \frac{2}{N} \cC^{-1}(U)(u^{(i)} - m(U)), \qquad i =1, \ldots, N.
	\end{equation}
	Indeed, again following the notation introduced in Section \ref{sec:gradient_flow_structure}, we have that
	\begin{subequations}
		\label{eq:log det calculation}
		\begin{align}
		\frac{\partial}{\partial U^{(\gamma,i)}} \log | \cC(U)| & = \left( \frac{1}{| \cC|} \sum_{\alpha,\beta  = 1}^D \frac{\partial | \cC| }{\partial \cC^{\alpha \beta}} \frac{\partial \cC^{\alpha \beta}}{\partial U^{(\gamma,i)}} \right)(U) \\
		& = \sum_{\alpha,\beta  = 1}^D (\cC^{-1})_{\alpha \beta} \left(\frac{1}{N} \delta_{\alpha \gamma} (u^{(i)} - m(U))_\beta + \frac{1}{N} \delta_{\beta \gamma} (u^{(i)} - m(U))_\alpha\right)
		\\
		& = \frac{2}{N}\left(\cC^{-1} (u^{(i)} - m(U))\right)_\gamma,
		\end{align}
	\end{subequations}
	using Jacobi's formula for determinants in the second line.
	For the first term in \eqref{eq:generator} we thus obtain
	\begin{subequations}
		\label{eq:estimate1}
		\begin{align}
		-& \sum_{i=1}^N \nabla_{u^{(i)}} \Phi(u^{(i)}) \cdot \cC(U) \nabla_{u^{(i)}}\cV(U)\\
		&= - \frac{D+1}{N}\sum_{i=1}^N \nabla_{u^{(i)}} \Phi(u^{(i)}) \cdot (u^{(i)} - m(U))
		\underbrace{- \sum_{i=1}^N \nabla_{u^{(i)}} \Phi(u^{(i)}) \cdot \cC(U) \nabla_{u^{(i)}} \Phi(u^{(i)})}_{\le 0} \\
		& \le\tilde{C}\left(1 + \cV_\Phi(U)\right).
		\end{align}
	\end{subequations}
	To bound the second term in \eqref{eq:generator}, we first notice the estimate
	\begin{equation}
	\label{eq:estimate2}
	\sum_{i=1}^N \nabla_{u^{(i)}} \cdot \left( \cC(U) \nabla_{u^{(i)}} \Phi(u^{(i)}) \right) \le \tilde{C}\left(1 + \cV(U)\right),
	\end{equation}
	again easily obtained from Assumption \ref{ass:V}. The other contribution is
	\begin{subequations}
	\begin{align}
	\sum_{i=1}^N \nabla_{u^{(i)}} \cdot \left(\cC(U) \nabla_{u^{(i)}} \cV_\cC(U)\right) &= -\frac{D+ 1}{N} \sum_{i=1}^N \nabla_{u^{(i)}} \cdot \left(u^{(i)} - m(U)\right) \\
	&= - \frac{(D+1)D}{2N} \left(N-1\right).
	\end{align}
	\end{subequations}
	Since the result is a constant, we clearly have the required estimate of the form $\mathcal{L}\cV \le \tilde{C} (1 + \cV_\Phi)$. In conjunction with \eqref{eq:estimate1} and \eqref{eq:estimate2} the claim follows.
\end{proof}

\noindent
Proposition \ref{prop:nondegeneracy} now essentially follows from adapting \cite[Theorem 2.1]{meyn1993stability3}. Textbook accounts of similar arguments can be found in \cite[Chapter 5]{khasminskii2011stochastic} and \cite[Chapter 2]{eberle2009markov}. For the convenience of the reader we provide a self-contained proof:

\begin{proof}[Proof of Proposition \ref{prop:nondegeneracy}]
	The potential $\cV$ is bounded from below by the growth condition on $\Phi$ (see Assumption \ref{ass:V}). We can therefore choose a constant $c_\cV$ such that $\cV_+ := \cV + c_\cV$ is nonnegative.
	Since $\cC(U_0)$ is assumed to be nondegenerate, there exists $k_0 \in \mathbb{N}$ such that $\cV_+(U_0) < k_0$.
	For $k \ge k_0$, let us define the sets
	\begin{equation}
	E_k = \left\{U \in E:  \quad \cV_+(U) < k \right\}
	\end{equation}
	and the stopping times
	\begin{equation}
	\tau_k =\inf \{  t \ge 0\ :  U_t \notin E_k \}  = \inf \left\{t \ge 0: \cV_+(U_t) = k \right\}.
	\end{equation}
	The stopping times $\tau_k$ are increasing in $k$, and so the limit $$\lim_{k \rightarrow \infty} \tau_k =: \xi$$ exists in $[0,+\infty]$. To prove the claim, it is sufficient to show that $\mathbb{P}[\xi = +\infty] = 1$.
	We now define
	\begin{equation}
	g(U,t) := e^{-\gamma t} \cV_+(U), \quad (U,t) \in E \times [0,\infty),
	\end{equation}
	where $\gamma$ is the constant obtained in Lemma \ref{lem:key lemma}. By using It{\^o}'s formula, optional stopping, and the bound \eqref{eq:Lyapunov bound} we see that
	\begin{subequations}
		\label{eq:supermartingale}
		\begin{align}
		\mathbb{E}  \left[ g(U_{t \wedge \tau_k}, t \wedge \tau_k) \right] & = g(U_0,0)  + \mathbb{E} \left[ \int_0^{t \wedge \tau_k} e^{-\gamma s} \left( -\gamma \cV_+(U_s) + \mathcal{L} \cV_+(U_s)\right)\mathrm{d}s\right]
		\\
		 & \le g(U_0,0) = \cV_+(U_0),
		\end{align}
	\end{subequations}
	for any $t \ge 0$ and $k \ge k_0$. On the other hand,
	\begin{subequations}
		\label{eq:chebyshev}
		\begin{align}
		\mathbb{E}  \left[ g(U_{t \wedge \tau_k}, t \wedge \tau_k) \right] &\ge e^{-\gamma t} \mathbb{E} \left[\cV_+ (U_{t \wedge \tau_k})\right]
		\\
		& \ge e^{-\gamma t} \left(\mathbb{E}\left[\mathbf{1}_{t < \tau_k} \cV_+(U_t)\right] + \mathbb{P}\left[\tau_k \le t \right] \cdot k\right) \ge e^{-\gamma t} \left( \mathbb{P}\left[\tau_k \le t \right] \cdot k\right),
		\end{align}
	\end{subequations}
	where the last estimate uses the fact that $\cV_+ \ge 0$. Combining \eqref{eq:supermartingale} and \eqref{eq:chebyshev}, we see that
	\begin{equation}
	e^{\gamma t} \cV_+(U_0) \ge \mathbb{P}\left[\tau_k \le t \right] \cdot k,
	\end{equation}
	for every $t \ge 0$ and $k \ge k_0$. It follows immediately that
	\begin{equation}
	\lim_{k \rightarrow \infty} \mathbb{P}\left[\tau_k \le t \right] = 0,
	\end{equation} and further
	\begin{equation}
	\label{eq:limit stopping}
	\mathbb{P}\left[\xi \le t \right] = 0
	\end{equation}
	by monotone convergence. Since \eqref{eq:limit stopping} holds for all $t \ge 0$, we conclude that $\mathbb{P}[\xi = \infty] =1$, as required.
\end{proof}

\noindent
For the proof of Proposition \ref{prop:ergodicity} we will need the following lemma:

\begin{lemma}
	\label{lem:path connected}
	Let $N \ge D+2$. Then $E$ is path-connected.
\end{lemma}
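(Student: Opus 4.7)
The plan is to identify $\Sigma := \mathbb{R}^{D \times N} \setminus E$ as a real algebraic subvariety whose real codimension in $\mathbb{R}^{D \times N}$ is at least $N - D \ge 2$, and then invoke the standard fact that complements of such sets in Euclidean space are path-connected.

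First I would observe that $\Sigma$ is the zero locus of the polynomial $U \mapsto \det \mathcal{C}(U)$, and can be characterized geometrically as the set of configurations whose columns $u^{(1)}, \dots, u^{(N)}$ all lie in a common affine hyperplane of $\mathbb{R}^D$; equivalently, there exists $v \ne 0$ with $(U')^{\mathrm T} v = 0$. Next I would parametrize $\Sigma$ by pairs $(H, (p_1, \dots, p_N))$ where $H$ is an affine hyperplane in $\mathbb{R}^D$ and $p_i \in H$. Since the space of affine hyperplanes in $\mathbb{R}^D$ is $D$-dimensional and the space of $N$ points in a fixed hyperplane has dimension $N(D-1)$, this gives $\dim \Sigma \le D + N(D-1) = ND - (N - D)$. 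The parametrization is generically finite-to-one, since $H$ is uniquely determined by the $p_i$ whenever they affinely span $H$, so $\mathrm{codim}_{\mathbb{R}^{DN}} \Sigma = N - D$, which is $\ge 2$ under the hypothesis $N \ge D + 2$.

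The final step is to appeal to the standard topological fact that the complement in $\mathbb{R}^n$ of a semi-algebraic subset of real codimension at least two is path-connected; this can be proved via a semi-algebraic triangulation of $\Sigma$ followed by a transversality argument, producing a generic path that meets $\Sigma$ only in isolated points which can be removed by local perturbation. Applied here, this yields path-connectedness of $E = \mathbb{R}^{D \times N} \setminus \Sigma$. The sharpness of the hypothesis $N \ge D+2$ is consistent with the remark following Proposition \ref{prop:ergodicity}: when $N = D+1$ the codimension of $\Sigma$ drops to one, and indeed $E$ splits into two connected components.

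I expect the main obstacle to be the appeal to the codimension-$\ge 2$ complement theorem rather than the dimension count itself, as this topological fact is standard but not completely elementary. A more hands-on alternative would be: for $U_0, U_1 \in E$, the straight segment $U_t = (1-t) U_0 + t U_1$ yields a polynomial $t \mapsto \det \mathcal{C}(U_t)$ nonzero at $t=0,1$, hence vanishing at only finitely many $t \in (0,1)$; near each bad parameter value the codim-$\ge 2$ structure of $\Sigma$ permits a local detour of the path within $E$, which can be made rigorous by an explicit perturbation in a direction transverse to $\Sigma$.
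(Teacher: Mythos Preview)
Your argument is correct and takes a genuinely different route from the paper's. You compute the codimension of the singular locus $\Sigma = \mathbb{R}^{D\times N} \setminus E$ via a global parametrisation (affine hyperplane plus $N$ points in it), obtain $\mathrm{codim}\,\Sigma = N-D \ge 2$, and then invoke the general fact that the complement of a codimension-$\ge 2$ semi-algebraic set in Euclidean space is path-connected. The paper instead gives an elementary constructive argument: it introduces the ``leave-one-out'' covariances $\cC^{-j}(U)$ and the set $\widetilde{E} = \{U : \cC^{-j}(U) \text{ invertible for all } j\}$, shows $\widetilde{E}$ is open and dense (via a polynomial-nonvanishing argument along line segments, close in spirit to your alternative sketch), and then builds a path explicitly by moving one particle at a time --- moving particle $j$ alone leaves $\cC^{-j}$ fixed, and the rank-one update $\cC(U) = \tfrac{N-1}{N}\cC^{-j}(U) + (\text{rank} \le 1)$ guarantees $\cC(U)$ stays invertible throughout. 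After each single-particle move, a small perturbation puts the configuration back into $\widetilde{E}$ so the next particle can be moved. Your approach is shorter and conceptually cleaner but imports a nontrivial stratification/transversality result that you rightly flag as the main black box; the paper's approach is longer but entirely self-contained and exploits the particle structure of the problem directly. Both identify $N \ge D+2$ as the correct threshold for the same underlying reason.
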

\begin{proof}
We begin by fixing some additional notation. For $j \in \{1, \ldots, N\}$, define the `leave-one-out' versions of the empirical mean and covariance,
\begin{equation}
m^{-j} (U) = \frac{1}{N-1} \sum_{\substack{i=1 \\ i \neq j}}^N u^{(i)}, \qquad
\cC^{-j} (U) = \frac{1}{N-1} \sum_{\substack{i=1 \\ i \neq j}}^N (u^{(i)} - m^{-j}(U)) (u^{(i)} - m^{-j}(U))^{\rm T}.
\end{equation}
Notice the update formula
\begin{equation}
\label{eq:update}
\cC(U) = \frac{N-1}{N} \cC^{-j}(U) + \frac{N-1}{N^2} (u^{(j)} - m^{-j}(U))(u^{(j)} - m^{-j}(U))^{\rm T},
\end{equation}
holding for any $j \in \{1, \ldots N\}$. Consider now the set
\begin{equation}
\widetilde{E} := \left\{U\in \mathbb{R}^{D \times N}: \quad \cC^{-j} \,\,\text{is invertible for all } j \in \{1, \ldots , N\}   \right\}.
\end{equation}
We see that $\widetilde{E} \subset E$ since the second term on the right-hand side of \eqref{eq:update} is positive semidefinite. Importantly, the condition $N \ge D + 2$ ensures that $\widetilde{E}$ is nonempty.

Observe that $\widetilde{E}$ has the representation
\begin{equation}
\widetilde{E} = \left\{ U \in \mathbb{R}^{D \times N}: \quad \prod_{j=1}^N \vert \cC^{-j}(U) \vert > 0  \right\},
\end{equation}
immediately implying that $\widetilde{E}$ is open. We now show that $\widetilde{E}$ is dense in $\mathbb{R}^{D \times N}$.
To this end, let $X \in \mathbb{R}^{D \times N}$ and $Y \in \widetilde{E}$. It is sufficient to prove that for every $\varepsilon > 0$ there exists $t \in (0,\varepsilon)$ such that $(1-t)X + tY \in \widetilde{E}$. For this, define $P:\mathbb{R} \rightarrow \mathbb{R}$ by
\begin{equation}
P(t) = \prod_{j=1}^N \vert \cC^{-j}((1-t)X + tY) \vert,
\end{equation}
which is clearly a polynomial. Since $Y \in \widetilde{E}$ we have that $P(1) > 0$, and so $P$ has only finitely many zeroes. This proves that indeed for all $\varepsilon > 0$ there exists $t \in (0,\varepsilon)$ such that $P(t) > 0$, and hence $(1-t)X + tY \in \widetilde{E}$.

We now show how to construct a continuous path between arbitrary $X,Y \in E$. By density of $\widetilde{E}$, it is enough to find a path between $\widetilde{X} \in \widetilde{E}$ and $\widetilde{Y} \in \widetilde{E}$ lying in connected neighbourhoods of $X$ and $Y$ respectively. Since $\widetilde{E}$ is open, there exist open neighbourhoods $U_{\widetilde{X}},U_{\widetilde{Y}} \subset \widetilde{E}$. It is then sufficient to find points in these neighbourhoods that can be connected by a continuous path. Denoting $\widetilde{X} = (x^{(1)}, \ldots, x^{(N)})$ and $\widetilde{Y} = (y^{(1)}, \ldots, y^{(N)})$, we can choose a continuous path $\gamma^{(1)}:[0,1] \rightarrow \mathbb{R}^D$ with $\gamma^{(1)}(0) = x^{(1)}$ and $\gamma^{(1)}(1) = y^{(1)}$, and set $\overline{\gamma}^{(1)}(t) = (\gamma^{1}(t), x^{(2)}, \ldots, x^{(N)})$. By \eqref{eq:update}, it is clear that $\overline{\gamma}^{(1)}(t) \in E$ for all $t \in [0,1]$. By density of $\widetilde{E}$ we can perturb $\overline{\gamma}^{(1)}(1)$ in order to ensure that   $\overline{\gamma}^{(1)}(1) \in \widetilde{E}$. We can now proceed iteratively to move the remaining particles using paths $\overline{\gamma}^{(2)},\ldots \overline{\gamma}^{(N)}$ and concatenate them, yielding the required total path. Note that the perturbation of the endpoints of $\overline{\gamma}^{(i)}$ can be chosen arbitrarily small in order to ensure that the final point $\overline{\gamma}^{(N)}(1)$ belongs to $U_{\widetilde{Y}}$.
\end{proof}

\begin{proof}[Proof of Proposition \ref{prop:ergodicity}]
Since the diffusion matrix $\Gamma(U) \Gamma(U)^{\rm T}$
is strictly positive definite on $E$, $E$ is path-connected by Lemma \ref{lem:path connected}, and the process
$(U_t)_{t \ge 0}$ admits an invariant measure with strictly positive Lebesgue-density by Corollary \ref{cor:invariance}, the process is positively recurrent and irreducible by the result in \cite{kliemann1987recurrence}.	We also refer to \cite[Section 2.2.2.1]{stoltz2010free}. The convergence in total variation distance then follows from \cite[Theorem 6.1]{meyn1993stability2}.
\end{proof}

%
%
\bibliographystyle{plain}
\bibliography{bib_paper}

\begin{thebibliography}{10}

\bibitem{ambrosio2008gradient}
L.~Ambrosio, N.~Gigli, and G.~Savar{\'e}.
\newblock {\em Gradient flows: {I}n metric spaces and in the space of
  probability measures}.
\newblock Springer Science \& Business Media, 2008.

\bibitem{sr:akir11}
J.~Amezcua, E.~Kalnay, K.~Ide, and S.~Reich.
\newblock Ensemble transform {K}alman-{B}ucy filters.
\newblock {\em Q.J.R. Meteor. Soc.}, 140:995--1004, 2014.

\bibitem{benamou2000computational}
J.-D. Benamou and Y.~Brenier.
\newblock A computational fluid mechanics solution to the
  {M}onge--{K}antorovich mass transfer problem.
\newblock {\em Numerische Mathematik}, 84(3):375--393, 2000.

\bibitem{sr:br11}
K.~Bergemann and S.~Reich.
\newblock An ensemble {K}alman-{B}ucy filter for continuous data assimilation.
\newblock {\em Meteorolog.~Zeitschrift}, 21:213--219, 2012.

\bibitem{carrillo2019proof}
JA~Carrillo, MG~Delgadino, and GA~Pavliotis.
\newblock A proof of the mean-field limit for $\lambda$-convex potentials by
  {$\Gamma$}-convergence.
\newblock {\em arXiv preprint arXiv:1906.04601}, 2019.

\bibitem{carrillo2019wasserstein}
J.A. Carrillo and U.~Vaes.
\newblock Wasserstein stability estimates for covariance-preconditioned
  {F}okker--{P}lanck equations.
\newblock {\em arXiv preprint arXiv:1910.07555}, 2019.

\bibitem{ding2019ensemble}
Z.~Ding and Q.~Li.
\newblock Ensemble {K}alman sampling: {M}ean-field limit and convergence
  analysis.
\newblock {\em arXiv preprint arXiv:1910.12923}, 2019.

\bibitem{duncan2017using}
A.B. Duncan, N.~N{\"u}sken, and G.A. Pavliotis.
\newblock Using perturbed underdamped {L}angevin dynamics to efficiently sample
  from probability distributions.
\newblock {\em Journal of Statistical Physics}, 169(6):1098--1131, 2017.

\bibitem{eberle2009markov}
A.~Eberle.
\newblock Markov processes.
\newblock Lecture notes, {U}niversity of {B}onn, 2009.

\bibitem{sr:evensen}
G.~Evensen.
\newblock {\em Data Assimilation. {T}he Ensemble Kalman Filter}.
\newblock Springer-Verlag, New York, 2006.

\bibitem{sr:GIHLS19}
A.~Garbuno-Inigo, F.~Hoffmann, W.~Li, and A.M. Stuart.
\newblock Interacting {L}angevin diffusions: {G}radient structure and ensemble
  {K}alman sampler.
\newblock {\em arXiv preprint arXiv:1910.12923}, 2019.

\bibitem{sr:GC11}
M.~Girolami and B.~Calderhead.
\newblock Riemann manifold {L}angevin and {H}amiltonian {M}onte {C}arlo
  methods.
\newblock {\em J. R. Statist. Soc. B}, 73:123--214, 2011.

\bibitem{sr:GW10}
J.~Goodman and J.~Weare.
\newblock Ensemble samplers with affine invariance.
\newblock {\em Comm. Appl. Math. and Comput. Science}, 5:65--80, 2010.

\bibitem{sr:Greengard2015}
P.~Greengard.
\newblock An ensemblized {M}etropolized {L}angevin sampler.
\newblock Master's thesis, Courant Institute, New York University, 2015.

\bibitem{hsu2002stochastic}
E.~P. Hsu.
\newblock {\em Stochastic analysis on manifolds}, volume~38.
\newblock American Mathematical Soc., 2002.

\bibitem{sr:JKO98}
R.~Jordan, D.~Kinderlehrer, and F.~Otto.
\newblock The variational formulation of the {F}okker--{P}lanck equation.
\newblock {\em SIAM Journal on Mathematical Analysis}, 29:1--17, 1998.

\bibitem{khasminskii2011stochastic}
R.~Khasminskii.
\newblock {\em Stochastic stability of differential equations}, volume~66.
\newblock Springer Science \& Business Media, 2011.

\bibitem{kliemann1987recurrence}
W.~Kliemann.
\newblock Recurrence and invariant measures for degenerate diffusions.
\newblock {\em The annals of probability}, 15(2):690--707, 1987.

\bibitem{KovachkiStuart2018_ensemble}
Nikola~B. Kovachki and Andrew~M. Stuart.
\newblock Ensemble {{Kalman Inversion}}: {{A Derivative}}-{{Free Technique For
  Machine Learning Tasks}}.
\newblock {\em Inverse Problems}, 35:095005, 2019.

\bibitem{sr:stuart15}
K.~Law, A.~Stuart, and K.~Zygalakis.
\newblock {\em Data assimilation: {A} mathematical introduction}.
\newblock Springer-Verlag, New York, 2015.

\bibitem{lee2006riemannian}
J.M. Lee.
\newblock {\em Riemannian manifolds: {A}n introduction to curvature}, volume
  176.
\newblock Springer Science \& Business Media, 2006.

\bibitem{sr:LMW18}
B.~Leimkuhler, Ch. Matthews, and J.~Weare.
\newblock Ensemble preconditioning for {M}arkov chain {M}onte {C}arlo
  simulations.
\newblock {\em Stat.~Comput.}, 28:277--290, 2018.

\bibitem{lisini2009nonlinear}
S.~Lisini.
\newblock Nonlinear diffusion equations with variable coefficients as gradient
  flows in {W}asserstein spaces.
\newblock {\em ESAIM: Control, Optimisation and Calculus of Variations},
  15(3):712--740, 2009.

\bibitem{livingstone2014information}
S.~Livingstone and M.~Girolami.
\newblock Information-geometric {M}arkov chain {M}onte {C}arlo methods using
  diffusions.
\newblock {\em Entropy}, 16(6):3074--3102, 2014.

\bibitem{ma2015complete}
Y.-A. Ma, T.~Chen, and E.~Fox.
\newblock A complete recipe for stochastic gradient {M}{C}{M}{C}.
\newblock In {\em Advances in Neural Information Processing Systems}, pages
  2917--2925, 2015.

\bibitem{machlup1953fluctuations}
S.~Machlup and L.~Onsager.
\newblock Fluctuations and irreversible process. ii. systems with kinetic
  energy.
\newblock {\em Physical Review}, 91(6):1512, 1953.

\bibitem{meyn1993stability2}
S.P. Meyn and R.L. Tweedie.
\newblock Stability of {M}arkovian processes ii: Continuous-time processes and
  sampled chains.
\newblock {\em Advances in Applied Probability}, 25(3):487--517, 1993.

\bibitem{meyn1993stability3}
S.P. Meyn and R.L. Tweedie.
\newblock Stability of {M}arkovian processes iii: {F}oster--{L}yapunov criteria
  for continuous-time processes.
\newblock {\em Advances in Applied Probability}, 25(3):518--548, 1993.

\bibitem{mielke2016generalization}
A.~Mielke, D.R.M Renger, and M.A. Peletier.
\newblock A generalization of {O}nsager’s reciprocity relations to gradient
  flows with nonlinear mobility.
\newblock {\em Journal of Non-Equilibrium Thermodynamics}, 41(2):141--149,
  2016.

\bibitem{sr:NR19}
N.~N\"usken and S.~Reich.
\newblock Note on interacting {L}angevin diffusion: {G}radient structure and
  ensemble {K}alman sampler.
\newblock Technical Report arXiv:1908.10890v1, University of Potsdam, 2019.

\bibitem{ottinger2005beyond}
H.C. {\"O}ttinger.
\newblock {\em Beyond equilibrium thermodynamics}.
\newblock John Wiley \& Sons, 2005.

\bibitem{SPSR2019}
S.~{Pathiraja} and S.~{Reich}.
\newblock {Discrete gradients for computational {B}ayesian inference}.
\newblock {\em Journal of Computational Dynamics}, 6:236--251, 2019.

\bibitem{sr:P14}
G.A. Pavliotis.
\newblock {\em Stochastic processes and applications}.
\newblock Springer--Verlag, New York, 2014.

\bibitem{sr:reich10}
S.~Reich.
\newblock A dynamical systems framework for intermittent data assimilation.
\newblock {\em BIT Numer Math}, 51:235--249, 2011.

\bibitem{sr:cotterreich}
S.~Reich and C.~J. Cotter.
\newblock Ensemble filter techniques for intermittent data assimilation.
\newblock In M.~Cullen, Freitag~M. A., S.~Kindermann, and R.~Scheichl, editors,
  {\em Large Scale Inverse Problems. Computational Methods and Applications in
  the Earth Sciences}, volume~13 of {\em Radon Ser. Comput. Appl. Math.}, pages
  91--134. Walter de Gruyter, Berlin, 2013.

\bibitem{sr:reichcotter15}
S.~Reich and C.J. Cotter.
\newblock {\em Probabilistic forecasting and {B}ayesian data assimilation}.
\newblock Cambridge University Press, Cambridge, 2015.

\bibitem{sr:RW19}
S.~Reich and S.~Weissmann.
\newblock Fokker--{P}lanck particle systems for {B}ayesian inference:
  {C}omputational approaches.
\newblock Technical Report arXiv:1911.10832, University of Potsdam, 2019.

\bibitem{sr:RS03}
G.O. Roberts and O.~Stramer.
\newblock Langevin diffusions and {M}etropolis--{H}astings algorithms.
\newblock {\em Methodol. Comput. App. Probab.}, 4:337--358, 2003.

\bibitem{sr:RDF78}
P.J. Rossky, J.D. Doll, and H.L. Friedmann.
\newblock Brownian dynamics as smart {M}onte {C}arlo simulation.
\newblock {\em J. Chem. Phys.}, 69:4628--4633, 1978.

\bibitem{sr:SS17}
C.~Schillings and A.M. Stuart.
\newblock Analysis of the ensemble {K}alman filter for inverse problems.
\newblock {\em SIAM J.~Numer.~Anal.}, 55:1264--1290, 2017.

\bibitem{stoltz2010free}
G.~Stoltz and M.~Rousset.
\newblock {\em Free energy computations: {A} mathematical perspective}.
\newblock World Scientific, 2010.

\bibitem{villani2008optimal}
C.~Villani.
\newblock {\em Optimal transport: {O}ld and new}, volume 338.
\newblock Springer Science \& Business Media, 2008.

\end{thebibliography}
%
%

\end{document}